\documentclass{amsart}
\usepackage{graphicx}
\usepackage{amssymb}
\usepackage{amsfonts}
\setlength{\footskip}{1cm}
\swapnumbers
\sloppy
\vfuzz2pt 
\hfuzz2pt 
\newtheorem{theorem}{Theorem}[section]
\newtheorem{lemma}[theorem]{Lemma}
\newtheorem{corollary}[theorem]{Corollary}

\theoremstyle{definition}

\newtheorem{example}[theorem]{Example}

\numberwithin{equation}{section}
 \theoremstyle{plain}
 
 \numberwithin{equation}{section} 
 \numberwithin{figure}{section} 
 \theoremstyle{plain}
 \theoremstyle{remark}
 \newtheorem*{acknowledgement*}{Acknowledgement}

\newcommand{\cA}{{\mathcal A}}

\newcommand{\cF}{{\mathcal F}}
\newcommand{\cG}{{\mathcal G}}
\newcommand{\cH}{{\mathcal H}}

\newcommand{\cL}{{\mathcal L}}

\newcommand{\cN}{{\mathcal N}}

\newcommand{\cX}{{\mathcal X}}


\newcommand{\Om}{{\Omega}}
\newcommand{\om}{{\omega}}
\newcommand{\ve}{{\varepsilon}}
\newcommand{\del}{{\delta}}
\newcommand{\Del}{{\Delta}}

\newcommand{\Gam}{{\Gamma}}

\newcommand{\Sig}{{\Sigma}}
\newcommand{\sig}{{\sigma}}
\newcommand{\al}{{\alpha}}
\newcommand{\be}{{\beta}}
\newcommand{\ka}{{\kappa}}
\newcommand{\la}{{\lambda}}


\newcommand{\bbN}{{\mathbb N}}
\newcommand{\bbP}{{\mathbb P}}
\newcommand{\bbR}{{\mathbb R}}

\newcommand{\bbI}{{\mathbb I}}

\newcommand{\bfY}{{\bf Y}}
\newcommand{\bfX}{{\bf X}}


\begin{document}
\title[]{Geometric law for multiple returns\\
until a hazard}%
 \vskip 0.1cm
 \author{ Yuri Kifer and Ariel Rapaport\\
\vskip 0.1cm
 Institute  of Mathematics\\
Hebrew University\\
Jerusalem, Israel}%
\address{
Institute of Mathematics, The Hebrew University, Jerusalem 91904, Israel}
\email{ kifer@math.huji.ac.il, ariel.rapaport@mail.huji.ac.il}%

\thanks{ }
\subjclass[2000]{Primary: 60F05 Secondary: 37D35, 60J05}%
\keywords{Geometric distribution, Poisson distribution, multiple returns,
nonconventional sums, $\psi$-mixing, stationary process, shifts.}%
\dedicatory{  }
 \date{\today}
\begin{abstract}\noindent
For a $\psi$-mixing stationary process $\xi_0,\xi_1,\xi_2,...$ we consider
the number $\cN_N$ of multiple recurrencies $\{\xi_{q_i(n)}\in\Gam_N,\,
i=1,...,\ell\}$ to a set $\Gam_N$ for $n$ until the moment $\tau_N$ (which
we call a hazard) when another multiple recurrence $\{\xi_{q_i(n)}\in\Del_N,\,
i=1,...,\ell\}$ takes place for the first time where $\Gam_N\cap\Del_N=
\emptyset$ and $q_i(n)<q_{i+1}(n),\, i=1,...,\ell$ are nonnegative increasing
functions taking on integer values on integers. It turns out that if
$P\{\xi_0\in\Gam_N\}$ and $P\{\xi_0\in\Del_N\}$ decay in $N$ with the same
speed then $\cN_N$ converges weakly to a geometrically distributed random
variable. We obtain also a similar result in the dynamical systems setup
considering a $\psi$-mixing shift $T$ on a sequence space $\Om$ and study
the number of multiple recurrencies $\{ T^{q_i(n)}\om\in A_n^b,\,
i=1,...,\ell\}$ until the first occurence of another multiple recurrence
$\{ T^{q_i(n)}\om\in A_m^a,\, i=1,...,\ell\}$ where $A_m^a,\, A_n^b$ are
cylinder sets of length $m$ and $n$ constructed by sequences $a,b\in\Om$,
 respectively, and chosen so that their probabilities have the same order.
This work is motivated by a number of papers on asymptotics of numbers
of single and multiple returns to shrinking sets, as well as by the papers
on open systems studying their behavior until an exit through a "hole".
\end{abstract}
\maketitle
\markboth{Yu.Kifer and A. Rapaport}{Multiple returns until a hazard}
\renewcommand{\theequation}{\arabic{section}.\arabic{equation}}
\pagenumbering{arabic}

\section{Introduction}\label{sec1}\setcounter{equation}{0}

Let $\xi_0,\,\xi_1,\,\xi_2,...$ be a sequence of independent identically
distributed (i.i.d.) random variables and $\Gam_N,\,\Del_N$ be a sequence
of sets such that $\Gam_N\cap\Del_N=\emptyset$ and
\[
\lim_{N\to\infty}\la^{-1}NP\{\xi_0\in\Gam_N\}=\lim_{N\to\infty}\nu^{-1}N
P\{\xi_0\in\Del_N\}=1.
\]
Then by the classical Poisson limit theorem
\[
S_N^{(\la)}=\sum_{n=0}^{N-1}\bbI_{\Gam_N}(\xi_n)\,\,\mbox{and}\,\,
S_N^{(\nu)}=\sum_{n=0}^{N-1}\bbI_{\Del_N}(\xi_n),
\]
where $\bbI_\Gam$ is the indicator of a set $\Gam$,
converge in distribution to Poisson random variables with parameters $\la$
and $\nu$, respectively. On the other hand, if
\[
\tau_N=\min\{ n\geq 0:\,\xi_n\in\Gam_N\}
\]
then it turns out that the sum
\[
S_{\tau_N}=\sum_{n=0}^{\tau_N-1}\bbI_{\Del_N}(\xi_n),
\]
which counts returns to $\Del_N$ until arriving at $\Gam_N$,
converges in distribution to a geometric random variable $\zeta$ with the
parameter $p=\la(\la+\nu)^{-1}$, i.e. $P\{\zeta=k\}=(1-p)^kp$.

Next, consider a more general setup which includes increasing functions
$q_i(n),\, i=1,...,\ell,\, n\geq 0$ taking on integer values on integers
and satifying $0\leq q_1(n)<q_2(n)<\cdots q_\ell(n)$ with all differences
$q_i(n)-q_{i-1}(n)$ tending to $\infty$ as $n\to\infty$. Here
we deal with "nonconventional" sums
\begin{equation}\label{1.1}
S_{\tau_N}=\sum_{n=0}^{\tau_N-1}\prod_{i=1}^\ell\bbI_{\Del_N}(\xi_{q_i(n)})
\end{equation}
defining now
\begin{equation}\label{1.2}
\tau_N=\min\{ n\geq 0:\,\prod_{i=1}^\ell\bbI_{\Gam_N}(\xi_{q_i(n)})=1\}
\end{equation}
and setting $\tau_N=\infty$ if the set in braces is empty. Now $S_{\tau_N}$
equals the number $\cN_N$ of multiple returns to $\Del_N$ until the first
multiple return to $\Gam_N$. It turns out that if
\begin{equation}\label{1.3}
\lim_{N\to\infty}\la^{-1}N(P\{\xi_0\in\Gam_N\})^\ell=\lim_{N\to\infty}\nu^{-1}N
(P\{\xi_0\in\Del_N\})^\ell=1
\end{equation}
then, again, $S_{\tau_N}$ converges in distribution to a geometric random
variable with the parameter $p=\la(\la+\nu)^{-1}$.

In the most general case in this setup we consider $\xi_0,\,\xi_1,\,\xi_2,...$
forming a $\psi$-mixing (see Section \ref{sec2}) stationary sequence of random
variables with $S_{\tau_N}$ defined again by (\ref{1.1}). We will show that if
(\ref{1.3}) holds true then, as in the i.i.d. case, $S_{\tau_N}$ will converge
in distribution to a geometric random variable with the parameter $p=
\la(\la+\nu)^{-1}$. In fact, we will obtain estimates for the total variation
distance between the distribution of $S_{\tau_N}$ and the geometric
distribution with the parameter $p=\la(\la+\nu)^{-1}$. On the other hand,
if the second equality in (\ref{1.3}) holds true and $S_N$ is the sum in
(\ref{1.1}) taken up to $N-1$ instead of $\tau_N-1$ then the distribution
of $S_N$ converges in total variation to the Poisson distribution with the
parameter $\nu$.

When $\ell=1$ the sum $S_{\tau_N}$ describes the number of returns to $\Del_N$
 by the sequence $\{\xi_n\}$ before reaching $\Gam_N$ which can be interpreted
as a "hole" through which the system (particle) exits and the count stops.
When $\ell>1$ the sum $S_{\tau_N}$ describes the number of multiple returns
to $\Del_N$ taking place at the moments $q_i(n),\, i=1,...,\ell$ until the
 system performs first multiple return to another set $\Gam_N$ (disjoint
 with $\Gam_N$) which we designate as a "hazard".

 We consider in this paper also another setup which comes from dynamical
 systems but has a perfect probabilistic sense, as well. Let $\zeta_k,\,
 k=0,1,2,...$ be a $\psi$-mixing discrete time process evolving on a finite
 or countable state space $\cA$. For each sequence $a=(a_0,a_1,a_2,...)\in
 \cA^\bbN$ of elements from $\cA$ and any $m\in\bbN$ denote by $a^{(m)}$
 the string $a_0,a_1,...,a_{m-1}$ which determines also an $m$-cylinder
 set $A^a_m$ in $\cA^\bbN$ consisting of sequences whose initial $m$-string
 coincides with $a_0,a_1,...,a_{m-1}$. Let $\tau^a_m$ be the first $l$ such
  that starting at the times
 $q_1(l),\, q_2(l),...,q_\ell(l)$ the process $\zeta_k=\zeta_k(\om),\,
 k\geq 0$ repeats the string $a^{(m)}=(a_0,...,a_{m-1})$. 
 Let $b=(b_0,b_1,...)\in\cA^\bbN,\, b\ne a$. We are interested
 in the number of $j<\tau^a_m$ such that process $\zeta_k$ repeats the string
  $b^{(n)}=(b_0,...,b_{n-1})$ starting at the
 times $q_1(j),\, q_2(j),...,q_\ell(j)$. Employing the left shift
 transformation $T$ on the sequence space $\cA^\bbN$ we can represent the
 number in question as a random variable on $\Om=\cA^\bbN$ given by the sum
 \begin{equation}\label{1.4}
 \Sig_{n,m}^{b,a}(\om)=\sum_{j=0}^{\tau_m^a-1}\prod_{i=1}^\ell\bbI_{A^b_n}
 (T^{q_i(j)}\om).
 \end{equation}
 We will show that for any $T$-invariant $\psi$-mixing probability measure $P$
 on $\Om$ and $P$-almost all $a,b\in\Om$ the distribution of random variables
 $\Sig^{b,a}_{n,m}$ approaches in the total variance distance as $n\to\infty$ 
 the geometric distribution with the parameter
 \[
 (P(A^a_m))^\ell\big ( (P(A^a_m))^\ell+(P(A^b_n))^\ell\big )^{-1}
 \]
 provided the ratio $P(A^b_n)/P(A^a_m)$ stays bounded away from zero and 
 infinity. In particular, if this ratio tends to $\la$ when $m=m(n)$ and
 $n\to\infty$ then the distribution of $\Sig_{n,m}^{b,a}$ converges
 in total variation distance to the geometric distribution with the 
 parameter $(1+\la^\ell)^{-1}$.

 Our results are applicable to larger  classes of dynamical systems and
 not only to shifts. Among such systems are smooth expanding endomorphisms
 of compact manifolds and Axiom A (in particular, Anosov)
 diffeomorphisms which have symbolic representations via Markov partitions
 (see \cite{Bo}). Then, in place of cylinder sets we can count multiple
 returns to an element of a Markov partition until first multiple return
 to another element of this partition. If for such dynamical systems we
 consider Sinai-Ruelle-Bowen type measures then the results can be extended
 to returns to geometric balls in place of elements of Markov partitions using
 approximations of the former by unions of the latter (see, for instance,
 the proof of Theorem 3 in \cite{HP}).
 The results remain true for some
 systems having symbolic representations with infinite alphabet, for instance,
 for the Gauss map $Tx=\frac 1x$ (mod 1), \, $x\in(0,1],\, T0=0$ of the unit
 interval considered with the Gauss measure $G(\Gam)=\frac 1{\ln 2}\int_\Gam
 \frac {dx}{1+x}$ which is known to be $T$-invariant and $\psi$-mixing with
 an exponential speed (\cite{He}). It seems that our geometric distribution
 results are new even for single return cases, i.e. when $\ell=1$.

 The motivation for the present paper is two-fold. On one hand, it comes from
  the series of papers deriving Poisson type asymptotics for distributions of
  numbers of single and multiple returns to appropriately shrinking sets
  (see, for instance, \cite{AV}, \cite{AS}, \cite{KR} and references there).
  On the other hand, our motivation was influenced by works on open dynamical
  systems which study dynamics of such systems until they exit the phase space
  through a "hole" (see, for instance, \cite{DWY} and references there). In
  our setup the number of multiple returns is studied until a "hazard" which
  is interpreted as certain $\ell$-tuple visits to a set which can be also
  viewed as a "hole". Then we can think on a system as a cluster of $\ell$
  particles which move together and loose one particle upon visiting a "hole"
  at prescribed times.

  The structure of this paper is as follows. In the next section we will
   describe precisely our setups and formulate main results. In Section
   \ref{sec3} we will prove our geometric limit theorem for the case of
   stationary processes and in Section \ref{sec4} this result will be
   derived for shifts.

 \section{Preliminaries and main results}\label{sec2}\setcounter{equation}{0}
\subsection{Stationary processes}\label{subsec2.1}
Our first setup includes a stationary sequence of random variables $\xi_0,\,
\xi_1,\,\xi_2,...$ defined on a complete probability space $(\Om,\cF,P)$
and a two parameter family of $\sig$-algebras $\cF_{mn}=\sig(\xi_m,
\xi_{m+1},...,\xi_n),\, m\leq n$, i.e. $\cF_{mn}$ is the minimal $\sig$-algebra
for which $\xi_m,\xi_{m+1},...,\xi_n$ are measurable. Recall, that the
$\psi$-dependence (mixing) coefficient between two $\sig$-algebras $\cG$ and
$\cH$ can be written in the form (see \cite{Br}),
\begin{eqnarray}\label{2.1}
&\psi(\cG,\cH)=\sup_{\Gam\in G,\Del\in\cH}\big\{\big\vert\frac {P(\Gam\cap\Del)}
{P(\Gam)P(\Del)}-1\big\vert,\, P(\Gam)P(\Del)\ne 0\big\}\\
&=\sup\{\| E(g|\cG)-E(g)\|_{L^\infty}:\, g\,\,\mbox{is}\,\,
\cH-\mbox{measurable and}\,\, E|g|\leq 1\}.\nonumber
\end{eqnarray}
Set also
\begin{equation}\label{2.2}
\psi(n)=\sup_{m\geq 0}\psi(\cF_{0,m},\cF_{m+n,\infty}).
\end{equation}
The sequence (process) $\xi_1,\xi_2,...$ is called $\psi$-mixing if
$\psi(1)<\infty$ and $\psi(n)\to 0$ as $n\to\infty$.

Our multiple recurrence setup includes also strictly increasing
functions $q_i,\, i=1,...,\ell$ taking on integer values on integers  and
satisfying
\begin{eqnarray}\label{2.3}
&0\leq q_1(n)<q_2(n)<...<q_\ell(n)\,\,\,\mbox{for all}\,\, n\geq 0\\
&\mbox{and}\,\,\, q(n)=\min_{k\geq n}\min_{1\leq i\leq\ell-1}( q_{i+1}(k)
-q_i(k))\to\infty\,\,\mbox{as}\,\, n\to\infty.\nonumber
\end{eqnarray}
 Set
\[
X_{n,\al}=\prod_{i=1}^\ell\bbI_{\Gam_\al}(\xi_{q_i(n)}),\,\,\al=0,1
\]
where $\Gam_0$ and $\Gam_1$ are disjoint Borel sets.
The sum $S_M=\sum_{n=0}^{M-1}X_{n,1}$ counts the number of multiple returns
of the sequence $\xi_1,\xi_2,...$ to $\Gam_1$ at times $q_1(n),\,
q_2(n),\,...,q_\ell(n)$ for $0\leq n< M$. Statistical properties
of such sums were studied in \cite{Ki} and \cite{KR}. Set
\begin{equation}\label{2.4}
\tau=\min\{ n\geq 0:\, X_{n,0}=1\}
\end{equation}
writing $\tau=\infty$ if the set in braces above is empty.
We will describe below the statistical properties of sums $S_{\tau}$
(setting $S_{0}=0$) which count the number of multiple returns of the sequence
 $\xi_1,\xi_2,...$
to $\Gam_1$ at times $q_1(n),\, q_2(n),\,...,q_\ell(n)$ until the random time
$\tau$ which we call a hazard.

For any two random variables or random vectors $Y$ and $Z$ of the same
dimension denote by $\cL(Y)$ and $\cL(Z)$ their distribution and by
\[
d_{TV}(\cL(Y),\,\cL(Z))=\sup_G|\cL(Y)(G)-\cL(Z)(G)|
\]
the total variation distance between $\cL(Y)$ and $\cL(Z)$ where the supremum
is taken over all Borel sets. We denote also by Geo$(\rho),\,\rho\in(0,1)$
the geometric distribution with the parameter $\rho$, i.e.
\[
\mbox{Geo}(\rho)\{ k\}=\rho(1-\rho)^k\,\,\mbox{for each}\,\, k\in\bbN=\{ 0,1,
...\}.
\]
Denote by $Q$ the distribution of $\xi_0$, i.e. $P\{\xi_0\in\Gam\}=Q(\Gam)$
for any Borel $\Gam\subset\bbR$.

\begin{theorem}\label{thm2.1} Let $\xi_0,\,\xi_1,\,\xi_2,...$ be a
$\psi$-mixing stationary process and assume that
the condition (\ref{2.3}) holds true.
Then for any disjoint Borel sets $\Gam_0,\,\Gam_1$ with $Q(\Gam_\al)>0,\,
\al=0,1$ and positive integers $M,\, R$ with $\psi(R)<2^{\frac 1{\ell+1}}-1$
we have
\begin{eqnarray}\label{2.5}
&d_{TV}(\cL(S_{\tau}),\,\mbox{Geo}(\rho))\leq C\bigg((1-Q(\Gam_0)^\ell)^M+
(Q(\Gam_0)^\ell+Q(\Gam_1)^\ell)\\
&\times\big(Q(\Gam_0)+Q(\Gam_1))MR+M\psi(R)+\sum_{n=0}^M\psi(q(n))\big)\bigg)
+2Q(\Gam_1)^\ell
\nonumber\end{eqnarray}
where $\rho=\frac {Q(\Gam_0)^\ell}{Q(\Gam_0)^\ell+Q(\Gam_1)^\ell}$
and the constant $C>0$ does not depend on $Q(\Gam_0),\, Q(\Gam_1)$, $M$ and $R$.
\end{theorem}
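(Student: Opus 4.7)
The plan is to reduce the problem to an approximate i.i.d.\ Bernoulli structure via $\psi$-mixing and then compute the distribution of $S_\tau$ by direct enumeration. First I would truncate and write $S_\tau=S_{\tau\wedge M}$ on $\{\tau\leq M\}$; on the complement, using that $\{X_{n,0}=1\}_{n\geq 0}$ is close to an i.i.d.\ Bernoulli$(Q(\Gam_0)^\ell)$ sequence (combining cross-$n$ mixing with gap $R$ and within-$n$ mixing with gap $q(n)$), the probability $P\{\tau>M\}$ is close to $(1-Q(\Gam_0)^\ell)^M$, producing the first term of the bound.

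Second, I would view $\{(X_{n,0},X_{n,1})\}_{0\leq n<M}$ as a sequence of disjoint event indicators (since $\Gam_0\cap\Gam_1=\emptyset$ forces $X_{n,0}X_{n,1}=0$) and compare its joint law in total variation with the product law of i.i.d.\ triples taking values $(1,0),(0,1),(0,0)$ with probabilities $Q(\Gam_0)^\ell,\,Q(\Gam_1)^\ell$ and $1-Q(\Gam_0)^\ell-Q(\Gam_1)^\ell$. Under this product law the computation is immediate: at each step the probability of an event is $Q(\Gam_0)^\ell+Q(\Gam_1)^\ell$ and, given an event, the conditional probability that it is a hazard is exactly $\rho$, so $S_{\tau\wedge M}$ is truncated Geo$(\rho)$; the discrepancy between the truncated and full geometric laws is absorbed into $2Q(\Gam_1)^\ell$.

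Third, the total variation cost of the coupling will decompose into three pieces: the $M\psi(R)$ term from applying \eqref{2.1}--\eqref{2.2} between $R$-separated blocks along $n$; the $\sum_{n=0}^{M}\psi(q(n))$ term from applying it within each cluster $\{q_1(n),\dots,q_\ell(n)\}$ to approximate $P\{X_{n,\al}=1\}$ by $Q(\Gam_\al)^\ell$; and the $MR(Q(\Gam_0)+Q(\Gam_1))(Q(\Gam_0)^\ell+Q(\Gam_1)^\ell)$ term from bounding the contribution of pairs $(n,m)$ with $|n-m|<R$ (where the block separation fails) by their joint event probability, which by stationarity and $\psi$-mixing is at most a constant times $(Q(\Gam_0)+Q(\Gam_1))(Q(\Gam_0)^\ell+Q(\Gam_1)^\ell)$ per pair, with $O(MR)$ pairs in total. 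The condition $\psi(R)<2^{1/(\ell+1)}-1$ controls multiplicative error accumulation across the $\ell+1$ nested applications of \eqref{2.1}.

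The main obstacle will be the careful bookkeeping for pairs $(n,m)$ whose index clusters $\{q_i(n)\}_i$ and $\{q_j(m)\}_j$ interlace: one cannot separate them by a single $\psi$-mixing gap, so each such joint event must be bounded using that at least $\ell+1$ of the involved $\xi$-coordinates fall into $\Gam_0\cup\Gam_1$, and the errors must be summed uniformly in the target value $k$ when comparing $P\{S_\tau=k\}$ with $\rho(1-\rho)^k$ to get total variation rather than just weak convergence.
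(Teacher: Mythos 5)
Your proposal is correct in outline and follows the same architecture as the paper's proof: truncate at $M$, compare the dependent indicator array in total variation with a model that is independent across $n$, with exactly the paper's three error sources (close pairs within distance $R$ bounded through the joint probability of $\ell+1$ of the $\xi$-coordinates, long-range dependence through $\psi(R)$ acting on the rare indicators, and the within-cluster correction $\sum_{n}\psi(q(n))$), then compute the law exactly under the limit model. The genuine difference is your choice of limit model: you compare with i.i.d.\ trinomial variables preserving $X_{n,0}X_{n,1}=0$, under which $S_{\tau\wedge M}$ is exactly a truncated $\mbox{Geo}(\rho)$; the paper instead compares with a fully independent Bernoulli array (so $X'_{n,0}$ and $X'_{n,1}$ are independent of each other), which is why it needs Lemma \ref{lem3.1} (the hazard count for two independent Bernoulli streams is geometric with parameter $p(p+q-pq)^{-1}$) and the extra term $A_5\le 2Q(\Gam_1)^\ell$ to pass from that parameter to $\rho$. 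Your variant buys a cleaner limit computation and in fact renders the $2Q(\Gam_1)^\ell$ term superfluous; note, however, that your attribution of the truncation discrepancy to $2Q(\Gam_1)^\ell$ is off --- under your model that discrepancy is at most $(1-Q(\Gam_0)^\ell)^M$, i.e.\ it belongs with the first term. The price of your variant is that you cannot quote the Chen--Stein bound of \cite{AGG} (Theorem 3) verbatim, which is precisely what the paper uses (with the $b_1,b_2,b_3$ bookkeeping and Lemmas 3.2--3.3 of \cite{KR}) to obtain the comparison with errors proportional to the event probabilities; you must either carry out your sequential blocking coupling in detail --- making sure the $\psi(R)$ contribution is extracted via the conditional-expectation form of (\ref{2.1}) applied to the rare indicators, so that it comes multiplied by $Q(\Gam_0)^\ell+Q(\Gam_1)^\ell$, which is needed for Corollary \ref{cor2.2} when $\psi$ decays slowly --- or do a two-step comparison (\cite{AGG} to the full product law, then the full product to your trinomial product, at extra cost $O(MQ(\Gam_0)^\ell Q(\Gam_1)^\ell)$, dominated by the $MR$ term). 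With that step supplied, your argument closes and yields (\ref{2.5}).
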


Next, let $\Gam_N,\,\Del_N,\, N=1,2,...$ be a sequence of pairs of disjoint
sets such that
\begin{equation}\label{2.6}
Q(\Gam_N),\, Q(\Del_N)\to 0\,\,\mbox{as}\,\, N\to\infty\,\,\mbox{and}\,\,
0<C^{-1}\leq\frac {Q(\Gam_N)}{Q(\Del_N)}\leq C<\infty
\end{equation}
for some constant $C$. Set
\begin{eqnarray*}
&X^{(N)}_{n,0}=\prod_{i=1}^\ell\bbI_{\Gam_N}(\xi_{q_i(n)}),\,
X^{(N)}_{n,1}=\prod_{i=1}^\ell\bbI_{\Del_N}(\xi_{q_i(n)})\\
&\tau_N=\min\{ n\geq 0:\, X^{(N)}_{n,0}=1\}\,\,\,\mbox{and}\,\,\,
S^{(N)}_M=\sum_{n=0}^{M-1}X^{(N)}_{n,1}.
\end{eqnarray*}

\begin{corollary}\label{cor2.2}
Suppose that the conditions of Theorem \ref{thm2.1} concerning the stationary
process $\xi_0,\xi_1,\xi_2,...$ and the functions $q_i(n),\, i=1,...,\ell$ are
satisfied. Let $\Gam_N,\,\Del_N,\, N=1,2,...$ be Borel sets satisfying
(\ref{2.6}). Then
\begin{equation}\label{2.7}
d_{TV}(\cL(S^{(N)}_{\tau_N}),\,\mbox{Geo}(\rho_N))\to 0\,\,\mbox{as}\,\,
N\to\infty
\end{equation}
where $\rho_N=Q(\Gam_N)^\ell(Q(\Del_N)^\ell+Q(\Gam_N)^\ell)^{-1}$. In
particular, if
\begin{equation}\label{2.8}
\lim_{N\to\infty}\frac {Q(\Del_N)}{Q(\Gam_N)}=\la
\end{equation}
then the distribution of $S^{(N)}_{\tau_N}$ converges in total variation
as $N\to\infty$ to the geometric distribution with the parameter
$(1+\la^\ell)^{-1}$.
\end{corollary}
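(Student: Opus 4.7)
The plan is to apply Theorem~\ref{thm2.1} with $\Gam_0=\Gam_N$ and $\Gam_1=\Del_N$, and to select sequences of integers $M_N,R_N\to\infty$ so that every term on the right side of (\ref{2.5}) tends to zero. Writing $p_N=Q(\Gam_N)^\ell$, condition (\ref{2.6}) yields $p_N\asymp Q(\Del_N)^\ell$, both tending to $0$, together with $Q(\Gam_N)+Q(\Del_N)\asymp Q(\Gam_N)$; the additive remainder $2Q(\Del_N)^\ell$ therefore vanishes automatically.

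I would first pick $M_N=\lceil p_N^{-1}\log(1/p_N)\rceil$, for which $(1-p_N)^{M_N}\le p_N\to 0$. To handle the $M_NR_N$-term in (\ref{2.5}), $R_N$ must satisfy $Q(\Gam_N)R_N\log(1/p_N)\to 0$, which is compatible with $R_N\to\infty$ because $Q(\Gam_N)\to 0$. Since $\psi(n)\to 0$, I can then demand further that $R_N$ grow slowly enough to ensure both $\psi(R_N)<2^{1/(\ell+1)}-1$ for large $N$ (so that Theorem~\ref{thm2.1} is applicable) and $M_N\psi(R_N)\to 0$. Finally, $q(n)\to\infty$ by (\ref{2.3}) forces $\psi(q(n))\to 0$, and a standard diagonal extraction allows $M_N\to\infty$ along a subsequence on which $\sum_{n=0}^{M_N}\psi(q(n))$ is also rendered negligible. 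Under these calibrated choices every summand of (\ref{2.5}) vanishes, establishing (\ref{2.7}).

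For the final assertion, (\ref{2.8}) gives
\[
\rho_N=\frac{1}{1+(Q(\Del_N)/Q(\Gam_N))^\ell}\longrightarrow\frac{1}{1+\la^\ell},
\]
and since the map $\rho\mapsto\mathrm{Geo}(\rho)$ is continuous in total variation (evident from the explicit formula $\mathrm{Geo}(\rho)\{k\}=\rho(1-\rho)^k$ together with dominated convergence on $\bbN$), combining this with (\ref{2.7}) via the triangle inequality yields convergence of $\cL(S^{(N)}_{\tau_N})$ to $\mathrm{Geo}\bigl((1+\la^\ell)^{-1}\bigr)$ in total variation.

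The hard part is the simultaneous calibration of $M_N$ and $R_N$ in the absence of a quantitative decay rate for $\psi$: $M_N$ must outgrow $p_N^{-1}$ to annihilate $(1-p_N)^{M_N}$, yet remain consistent with the mixing contributions $M_N\psi(R_N)$ and $\sum_{n\le M_N}\psi(q(n))$, while $R_N$ must lie in a narrow window where the $M_NR_N$-factor is small but $\psi(R_N)$ is small enough to control the $\psi(R)$-term. This balancing is enforced by a soft diagonal/extraction argument rather than by an explicit formula.
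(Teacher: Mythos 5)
Your overall route is exactly the paper's: apply Theorem \ref{thm2.1} with $\Gam_0=\Gam_N$, $\Gam_1=\Del_N$ and choose $M_N,R_N\to\infty$ so that the right-hand side of (\ref{2.5}) vanishes; the final step (continuity of $\rho\mapsto\mbox{Geo}(\rho)$ in total variation plus the triangle inequality) is fine. The gap is in the calibration. By pinning $M_N=\lceil p_N^{-1}\log(1/p_N)\rceil$ (so $M_Np_N\asymp\log(1/p_N)$) you may make the remaining requirements on $R_N$ unsatisfiable, because $\psi$-mixing comes with no rate. After the factor $Q(\Gam_N)^\ell+Q(\Del_N)^\ell\asymp p_N$ is taken into account, the $M\psi(R)$ term forces $\psi(R_N)\log(1/p_N)\to 0$, while the $MR$ term caps $R_N=o\bigl(Q(\Gam_N)^{-1}/\log(1/p_N)\bigr)\le Q(\Gam_N)^{-1}$. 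Nothing in the hypotheses rules out, say, $\psi(R)\asymp 1/\log\log R$; then every admissible $R_N\le Q(\Gam_N)^{-1}$ has $\psi(R_N)\gtrsim 1/\log\log(1/p_N)$, so $\psi(R_N)\log(1/p_N)\to\infty$, i.e.\ the window for $R_N$ is empty and (\ref{2.5}) does not tend to zero for your $M_N$, no matter how $R_N$ is chosen. (Your stated condition $M_N\psi(R_N)\to 0$ is even stronger than what (\ref{2.5}) needs, and ``let $R_N$ grow slowly'' points the wrong way: small $\psi(R_N)$ requires large $R_N$.) The same pinning endangers the last term as well: $p_N\sum_{n\le M_N}\psi(q(n))$ equals $\log(1/p_N)$ times a Ces\`aro average of $\psi(q(n))$, which tends to $0$ with no prescribed speed, so its product with $\log(1/p_N)$ need not vanish. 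Finally, your ``diagonal extraction along a subsequence'' cannot repair this: (\ref{2.7}) asserts convergence of the full sequence $N\to\infty$, and the extraction as you describe it is applied only to the $\sum\psi(q(n))$ term while $M_N$ stays fixed.

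The repair is what the paper actually does (conditions (\ref{3.30})): do not prescribe $M_N$ explicitly, but let $M_NQ(\Gam_N)^\ell\to\infty$ \emph{as slowly as needed}, jointly with $R_N$. Concretely, for each integer $k$ pick $R^{(k)}$ with $\psi(R^{(k)})\le k^{-2}$ and $\psi(R^{(k)})<2^{1/(\ell+1)}-1$; since for fixed $k$ one has $p_N\sum_{n\le\lfloor k/p_N\rfloor}\psi(q(n))\le k\cdot o(1)$ (Ces\`aro, because $\lfloor k/p_N\rfloor\to\infty$) and $kR^{(k)}Q(\Gam_N)\to 0$ as $N\to\infty$, one can choose $N_k\uparrow\infty$ so that both are $\le 1/k$ for $N\ge N_k$. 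Setting $M_N=\lfloor k/p_N\rfloor$ and $R_N=R^{(k)}$ for $N_k\le N<N_{k+1}$ gives $M_Np_N\asymp k\to\infty$, $(1-p_N)^{M_N}\to 0$, $p_NM_N\psi(R_N)\lesssim k^{-1}\to 0$, $p_NM_NR_NQ(\Gam_N)\to 0$ and $p_N\sum_{n\le M_N}\psi(q(n))\to 0$ along the whole sequence, which is precisely (\ref{3.30}); then every term in (\ref{2.5}) vanishes and the rest of your argument (including the limit $\rho_N\to(1+\la^\ell)^{-1}$ under (\ref{2.8})) goes through unchanged.
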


The arguments in the proof of Theorem \ref{thm2.1} and Corollary \ref{cor2.2}
will yield also the following multiple recurrence results which generalize
some of the results from \cite{Ki} (where only independent and Markov sequences
$\xi_n,\, n\geq 0$ were considered).

\begin{theorem}\label{thm2.3} Let the conditions of Theorem \ref{thm2.1}
concerning the stationary process $\xi_0,\xi_1,\xi_2,...$ and the functions
 $q_i(n),\, i=1,...,\ell$ hold true. Let $\Gam$ be a Borel set,
 $X_n=\prod_{i=1}^\ell\bbI_\Gam(\xi_{q_i(n)})$ and $S_N=\sum_{n=0}^{N-1}X_n$.
  Then
 \begin{equation}\label{2.9}
 d_{TV}(\cL(S_N),\,\mbox{Pois}(\la))\leq CNQ(\Gam)^\ell\big( RQ(\Gam)+\psi(R))
 +\wp(CQ(\Gam)^\ell\sum_{n=0}^N\psi(q(n))\big)
 \end{equation}
 where $\la=NQ(\Gam)^\ell$, $\wp(x)=xe^{-x}$, $R<N$ is an arbitrary positive
 integer with $\psi(R)<2^{\frac 1{\ell+1}}-1$, $C>0$ is a constant which
 does not depend
 on $Q(\Gam)$, $N$ and $R$ and Pois$(\la)$ denotes the Poisson distribution
 with the parameter $\la$.
 \end{theorem}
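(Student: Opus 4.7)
The plan is to apply the Chen--Stein method for Poisson approximation of sums of locally dependent indicators. With $\mu = E(S_N)$ and dependency neighborhoods
\[
B_n = \{m \in \{0,\dots,N-1\} : |m-n| \leq R\},
\]
the standard Chen--Stein bound yields
\[
d_{TV}(\cL(S_N), \mathrm{Pois}(\mu)) \leq C(b_1 + b_2 + b_3),
\]
where $b_1 = \sum_n \sum_{m \in B_n} E(X_n)E(X_m)$, $b_2 = \sum_n \sum_{m \in B_n \setminus \{n\}} E(X_n X_m)$, and $b_3 = \sum_n E|E(X_n \mid \sig(X_m : m \notin B_n)) - E(X_n)|$. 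In the final step, the remaining gap between $\mathrm{Pois}(\mu)$ and $\mathrm{Pois}(\la)$ is absorbed using a Poisson-to-Poisson total variation inequality, which produces the $\wp(\cdot)$ term in \eqref{2.9}.

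A key preliminary estimate controls $E(X_n) = E\prod_{i=1}^\ell \bbI_\Gam(\xi_{q_i(n)})$. Iterating the $\psi$-mixing definition \eqref{2.1} across the $\ell-1$ gaps $q_{i+1}(n) - q_i(n) \geq q(n)$, one obtains
\[
|E(X_n) - Q(\Gam)^\ell| \leq C \psi(q(n)) Q(\Gam)^\ell,
\]
where the hypothesis $\psi(R) < 2^{1/(\ell+1)} - 1$ keeps the accumulated factor $(1+\psi(q(n)))^{\ell-1}$ uniformly bounded. Summing over $n$ yields $|\mu - \la| \leq C Q(\Gam)^\ell \sum_{n=0}^N \psi(q(n))$, which is precisely the argument of the $\wp$ factor.

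For the three Chen--Stein quantities: $b_1 \leq CNR Q(\Gam)^{2\ell} \leq CNR Q(\Gam)^{\ell+1}$, and $b_3 \leq CN\psi(R) Q(\Gam)^\ell$ by applying \eqref{2.1} with $\cG = \sig(\xi_{q_1(n)},\dots,\xi_{q_\ell(n)})$ and $\cH = \sig(X_m : m \notin B_n)$, which is separated from $\cG$ by distance at least $R$. The decisive estimate is for $b_2$: when $n \neq m$, strict monotonicity of $q_\ell$ forces the union $\{q_i(n)\}_{i=1}^\ell \cup \{q_j(m)\}_{j=1}^\ell$ to contain at least $\ell+1$ distinct integers, so applying iterated $\psi$-mixing across the interior gaps of this union gives $E(X_n X_m) \leq C Q(\Gam)^{\ell+1}$, hence $b_2 \leq CNR Q(\Gam)^{\ell+1}$. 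Combining these bounds with the Poisson comparison step reproduces \eqref{2.9}.

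The main technical obstacle is the combinatorial bookkeeping in the $b_2$ estimate: for nearby $n$ and $m$, the index sets $\{q_i(n)\}$ and $\{q_j(m)\}$ can interleave in many patterns, and one must verify that regardless of the interleaving the iterated application of $\psi$-mixing delivers the uniform $Q(\Gam)^{\ell+1}$ bound with a constant independent of the pattern. The hypothesis $\psi(R) < 2^{1/(\ell+1)} - 1$ is invoked here to ensure that the product of up to $\ell+1$ mixing factors stays bounded by an absolute constant, so that the final prefactor $C$ in \eqref{2.9} can indeed be taken independent of $Q(\Gam)$, $N$, and $R$.
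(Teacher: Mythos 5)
Your overall route coincides with the paper's: the Arratia--Goldstein--Gordon (Chen--Stein) bound applied to the indicators $X_n$ with neighborhoods of radius $R$, estimates of $b_1,b_2,b_3$, and then a Poisson--Poisson total variation comparison based on $|ES_N-NQ(\Gam)^\ell|\le CQ(\Gam)^\ell\sum_{n=0}^N\psi(q(n))$, which produces the $\wp$-term. The genuine gap is in your $b_3$ step and comes from your choice of neighborhoods. You take $B_n=\{m:\,|m-n|\le R\}$, i.e.\ distance in the index, whereas the argument needs (and the paper uses) distance between the sampling times, $\del(m,n)=\min_{1\le i,j\le\ell}|q_i(m)-q_j(n)|\le R$. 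For general $q_i$ satisfying (\ref{2.3}) these are incomparable: take $\ell=2$, $q_1(n)=n$, $q_2(n)=2n$ and $m=n/2$ for even $n>2R$; then $|m-n|>R$, so $X_m$ is among the conditioning variables in $b_3$, yet $q_2(m)=q_1(n)$, so $X_m$ and $X_n$ share the coordinate $\xi_n$. Already conditioning on this single $X_m$ gives $E\big\vert E\big(X_n-p_n\mid\sig(X_m)\big)\big\vert$ of order $Q(\Gam)^{\ell+1}$ rather than $O(\psi(R)Q(\Gam)^\ell)$, and by Jensen this lower-bounds the corresponding term with the full $\sig$-algebra $\sig(X_m:\,m\notin B_n)$. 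So your claim that this $\sig$-algebra is ``separated from $\sig(\xi_{q_1(n)},\dots,\xi_{q_\ell(n)})$ by distance at least $R$'' is false, and the asserted bound $b_3\le CN\psi(R)Q(\Gam)^\ell$ does not follow; even though the offending contribution happens to be of an order absorbed by the $NRQ(\Gam)^{\ell+1}$ term in (\ref{2.9}), controlling the conditional expectation given the mixture of colliding and distant variables would need an argument you do not give. The clean fix is the paper's time-distance neighborhood $B_n^{N,R}=\{l:\,\del(l,n)\le R\}$, which by strict monotonicity of the $q_i$ still has cardinality $O(\ell^2R)$, so $b_1$ and $b_2$ keep the same order and your estimates for them (which match (\ref{3.20}) and (\ref{3.22})--(\ref{3.23})) go through.

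A second, related point: even with the correct neighborhoods, the conditioning $\sig$-algebra in $b_3$ is generated by $\xi$'s at times that interleave with $q_1(n),\dots,q_\ell(n)$ while staying at distance $>R$ from them, so the definition (\ref{2.1})--(\ref{2.2}) of $\psi(R)$, which compares a past block with a future block, does not apply directly; the paper invokes Lemma 3.3 of \cite{KR}, bounding the $\psi$-coefficient between such interleaved $\sig$-algebras by $2^{2(\ell+2)}\psi(R)(2-(1+\psi(R))^{\ell+1})^{-2}$, and this is exactly where the hypothesis $\psi(R)<2^{\frac 1{\ell+1}}-1$ is needed. In your write-up that hypothesis is instead credited with bounding the products of mixing factors in the $E(X_n)$ and $b_2$ estimates, but there boundedness follows simply from $\psi(q(n))\le\psi(1)<\infty$ (monotonicity of $\psi$ and the gaps being at least $1$); those two estimates are otherwise correct, as is your final Poisson comparison step.
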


 \begin{corollary}\label{cor2.4} Under the conditions of Theorem \ref{thm2.3}
 suppose that in place of one set $\Gam$ we have a sequence of Borel sets
 $\Gam_N$ such that
 \begin{equation}\label{2.10}
 0<C^{-1}\leq NQ(\Gam_N)^\ell\leq C<\infty
 \end{equation}
 for some constant $C$. Set
 \[
 X^{(N)}_n=\prod_{i=1}^\ell\bbI_{\Gam_N}(\xi_{q_i(n)})\,\,\mbox{and}\,\,
 S_N=\sum_{n=0}^{N-1}X_n^{(N)}.
 \]
 Then
 \begin{equation}\label{2.11}
 d_{TV}(\cL(S_N),\,\mbox{Pois}(\la_N))\to 0\,\,\mbox{as}\,\, N\to\infty
 \end{equation}
 where $\la_N=NQ(\Gam_N)^\ell$. In particular, if
 \begin{equation}\label{2.12}
 \lim_{N\to\infty}NQ(\Gam_N)^\ell=\la
 \end{equation}
 then the distribution of $S_N$ converges in total variation as $N\to\infty$
 to the Poisson distribution with the parameter $\la$.
 \end{corollary}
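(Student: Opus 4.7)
The plan is to reduce Corollary \ref{cor2.4} to a pointwise application of the quantitative estimate (\ref{2.9}) provided by Theorem \ref{thm2.3}, by choosing the auxiliary integer $R$ in that bound as a suitable sequence $R=R(N)\to\infty$ that makes every error term tend to zero. Applying Theorem \ref{thm2.3} with $\Gam=\Gam_N$ and observing that the first factor $NQ(\Gam_N)^\ell$ on the right hand side of (\ref{2.9}) is exactly $\la_N$, one gets
\[
d_{TV}(\cL(S_N),\,\mbox{Pois}(\la_N))\leq C\la_N\big(R(N)Q(\Gam_N)+\psi(R(N))\big)+\wp\Big(CQ(\Gam_N)^\ell\sum_{n=0}^N\psi(q(n))\Big),
\]
so it suffices to show that both summands on the right vanish as $N\to\infty$.

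For the first summand, assumption (\ref{2.10}) says $\la_N=NQ(\Gam_N)^\ell$ is bounded, which implies in particular that $Q(\Gam_N)\leq (C/N)^{1/\ell}\to 0$. Hence $Q(\Gam_N)^{-1}\to\infty$ polynomially in $N$, and I would take, say, $R(N)=\lfloor Q(\Gam_N)^{-1/2}\rfloor$, so that $R(N)\to\infty$ while $R(N)Q(\Gam_N)\to 0$. Since the $\psi$-mixing coefficient satisfies $\psi(n)\to 0$, the unbounded growth of $R(N)$ forces $\psi(R(N))\to 0$; in particular the side condition $\psi(R(N))<2^{1/(\ell+1)}-1$ needed to invoke Theorem \ref{thm2.3} is satisfied for all sufficiently large $N$. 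Combining this with the boundedness of $\la_N$ yields $\la_N(R(N)Q(\Gam_N)+\psi(R(N)))\to 0$. For the second summand, condition (\ref{2.3}) gives $q(n)\to\infty$, and hence $\psi(q(n))\to 0$; a Cesaro averaging argument then gives $N^{-1}\sum_{n=0}^N\psi(q(n))\to 0$, which combined with $Q(\Gam_N)^\ell\leq C/N$ shows that the argument of $\wp$ tends to $0$. Continuity of $\wp(x)=xe^{-x}$ at the origin then kills the second term.

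The particular case (\ref{2.12}) follows by the triangle inequality for $d_{TV}$,
\[
d_{TV}(\cL(S_N),\,\mbox{Pois}(\la))\leq d_{TV}(\cL(S_N),\,\mbox{Pois}(\la_N))+d_{TV}(\mbox{Pois}(\la_N),\,\mbox{Pois}(\la)),
\]
together with the standard fact that Poisson distributions depend continuously on their parameter in total variation norm. There is no serious obstacle in this proof: the only delicate bookkeeping point is balancing $R(N)$ so that $R(N)\to\infty$, $R(N)Q(\Gam_N)\to 0$ and $\psi(R(N))\to 0$ hold simultaneously, but this is straightforward once one notices that $Q(\Gam_N)^{-1}$ diverges polynomially fast in $N$ under (\ref{2.10}). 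All the real work has already been done in Theorem \ref{thm2.3}.
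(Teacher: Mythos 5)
Your proposal is correct and follows essentially the same route as the paper: apply the bound (\ref{2.9}) with $\Gam=\Gam_N$, pick $R_N\to\infty$ with $R_NQ(\Gam_N)\to 0$ (so $\psi(R_N)\to 0$ as well), and use (\ref{2.10}) together with $q(n)\to\infty$ to kill the $\wp$-term. Your explicit choice $R(N)=\lfloor Q(\Gam_N)^{-1/2}\rfloor$, the Cesaro argument, and the triangle inequality with continuity of $\mbox{Pois}(\la)$ in $\la$ (cf.\ (\ref{3.33})) are just spelled-out versions of what the paper leaves implicit.
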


\subsection{Shifts}\label{subsec2.2}
Our second setup consists of a finite or countable set $\mathcal{A}$,
the sequence space $\Omega=\mathcal{A}^{\mathbb{N}}$,
the $\sigma$-algebra
 $\mathcal{F}$ on $\Omega$ generated by cylinder
sets, the left shift
 $T:\Omega\rightarrow\Omega$, and a $T$-invariant probability measure
 $P$ on $(\Omega,\mathcal{F})$.
We assume that $P$ is $\psi$-mixing
with the $\psi$-dependence coefficient given by (\ref{2.1}) and (\ref{2.2})
considered with respect to the $\sig$-algebras $\cF_{mn},\, n\geq m$ generated
by the cylinder sets $\{\om=(\om_0,\om_1,...)\in\Om:\,\om_i=a_i\,$ for
$m\leq i\leq n\}$ for some $a_m,a_{m+1},...,a_n\in\cA$. Clearly, $\cF_{mn}=
T^{-m}\cF_{0,n-m}$ for $n\geq m$. For each word $a=(a_0,a_1,...,a_{n-1})\in
\cA^n$ we will use the notation $[a]=\{\om=(\om_0,\om_1,...):\, \om_i=a_i,\,
 i=0,1,...,n-1\}$ for the corresponding cylinder set. Write
 $\Omega_{P}$ for the support
of $P$, i.e.
\[
\Om_P=\{\om\in\Om:\,P[\om_0,...,\om_n]>0\,\,\mbox{for all}\,\, n\geq 0\}.
\]
 For $n\ge 1$ set $\mathcal{C}_{n}=\{[w]\::\:w\in\mathcal{A}^{n}\}$.
Since $P$ is $\psi$-mixing it follows (see \cite{KR}, Lemma 3.1)
that there exists $\upsilon>0$ such that
\begin{equation}
P(A)\le e^{-\upsilon n}\text{ for all \ensuremath{n\ge1} and
\ensuremath{A\in\mathcal{C}_{n}}}\:.\label{eq:estimate of measure of cylinders}
\end{equation}
For $n,m\ge1$, $A\in\mathcal{C}_{n}$ and $B\in\mathcal{C}_{m}$ set $n\vee m=
\max\{n,m\}$, $n\wedge m=\min\{n,m\}$,
\[
\pi(A)=\min\{1\le k\le n\::\:A\cap T^{-k}A\ne\emptyset\}
\]
and
\[
\pi(A,B)=\min\{0\le k\le n\wedge m \::\:A\cap T^{-k}B\ne\emptyset
\mbox{ or }B\cap
T^{-k}A\ne\emptyset\}\:.
\]

Let strictly increasing functions $q_{1},...,q_{\ell}:\mathbb{N}\rightarrow
\mathbb{N}$ satisfy (\ref{2.3}) with $q(n)$ defined there.
For each $n\in\mathbb{N}$ define also
\[
\gamma(n)=\min\{k\ge0\::\:q(k)\ge2n\}.
\]

For $\eta\in\Omega$ and $n\ge1$ write $A_{n}^{\eta}=[\eta_{0}...\eta_{n-1}]
\in\mathcal{C}_{n}$.
Let $\tau_{n}^{\eta}:\Omega\rightarrow\mathbb{N}$ be with
\[
\tau_{n}^{\eta}(\omega)=\inf\{k\ge1\::\:T^{q_{i}(k)}\omega\in A_{n}^{\eta}
\text{ for all }1\le i\le\ell\}\:.
\]
For $\eta,\omega\in\Omega$ and $n,m\ge1$ define $\Sig^{\om,\eta}_{n,m}:\Omega
\rightarrow\mathbb{N}$
by
\[
\Sig^{\om,\eta}_{n,m}=\sum_{k=0}^{\tau_{m}^{\eta}-1}\prod_{i=1}^{\ell}
\bbI_{A_{n}^{\omega}}\circ T^{q_{i}(k)}
\]
and write
\[
\kappa^{\omega,\eta}_{n,m}=\min\{\pi(A_{n}^{\omega},A_{m}^{\eta}),
\pi(A_{n}^{\omega}),\pi(A_{m}^{\eta})\}\:.
\]

\begin{theorem}\label{thm:main thm}
There exists a constant $C=C(\ell,\psi(1))\ge1$
such that for every $(\omega,\eta)\in\Omega_{P}\times
\Omega_{P}$
and $n,m\ge1$ with $\psi(m)<(3/2)^{1/(\ell+1)}-1$,
\begin{multline*}
d_{TV}(\mathcal{L}(\Sig^{\om,\eta}_{n,m}),Geo(\frac{P(A_{m}^{\eta})^{
\ell}}{P(A_{m}^{\eta})^{\ell}+P(A_{n}^{\omega})^{\ell}}))\\
\le C\left(e^{-\frac 12\upsilon\kappa^{\omega,\eta}_{n,m}}\gamma(n\vee m)+
\left(1+
\left(\frac{P(A_{n}^{\omega})}{P(A_{m}^{\eta})}\right)^{\ell}
\right)\left((n\vee m)e^{-\frac 12\upsilon\kappa^{\omega,\eta}_{n,m}}
+\psi(m)^{1/2}
\right)
\right)\:.
\end{multline*}
\end{theorem}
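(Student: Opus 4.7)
The plan is to adapt the argument proving Theorem \ref{thm2.1} to the present cylinder-set setting, taking the return cylinder $A_{n}^{\omega}$ as $\Gam_{1}$ and the hazard cylinder $A_{m}^{\eta}$ as $\Gam_{0}$. Two new issues arise compared to Theorem \ref{thm2.1}: (i) each indicator $\bbI_{A_{n}^{\omega}}\circ T^{q_{i}(k)}$ depends on $n$ consecutive coordinates rather than one, so the index windows for different $i$ (at fixed $k$) or for different $k$'s can overlap; (ii) the two cylinders themselves can share short return patterns, as quantified by $\kappa_{n,m}^{\omega,\eta}$.

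First I would split $\Sig_{n,m}^{\omega,\eta}$ at the threshold $k^{*}=\gamma(n\vee m)$. For $k\ge k^{*}$, the blocks $[q_{i}(k),q_{i}(k)+(n\vee m)-1]$, $i=1,\dots,\ell$, are pairwise separated by at least $n\vee m$, so iterated use of $\psi$-mixing yields
\[
P\Bigl(\prod_{i=1}^{\ell}\bbI_{A_{n}^{\omega}}\circ T^{q_{i}(k)}=1\Bigr)=P(A_{n}^{\omega})^{\ell}\bigl(1+O(\psi(n\vee m))\bigr),
\]
with the analogous estimate for $A_{m}^{\eta}$. For $k<k^{*}$, a nonzero product forces $A_{n}^{\omega}\cap T^{-j}A_{n}^{\omega}\ne\emptyset$ (or the mixed version with $A_{m}^{\eta}$) for some $|j|<n\vee m$; the period bounds $\pi(A_{n}^{\omega}),\pi(A_{m}^{\eta}),\pi(A_{n}^{\omega},A_{m}^{\eta})\ge\kappa_{n,m}^{\omega,\eta}$ combined with (\ref{eq:estimate of measure of cylinders}) then bound this contribution by $O(\gamma(n\vee m)e^{-\frac{1}{2}\upsilon\kappa_{n,m}^{\omega,\eta}})$, accounting for the first term of the stated estimate.

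For $k\ge k^{*}$, I would mimic the block-decoupling carried out for Theorem \ref{thm2.1}: partition the trial times into $\psi$-mixing blocks separated by gaps of size $\sim m$, couple the joint law of the block sums to that of independent trials with per-trial success probability $P(A_{n}^{\omega})^{\ell}$ and hazard probability $P(A_{m}^{\eta})^{\ell}$, and use the elementary fact that the count of successes before the first hazard in the independent model is exactly $\mbox{Geo}(\rho)$ with $\rho=P(A_{m}^{\eta})^{\ell}/(P(A_{m}^{\eta})^{\ell}+P(A_{n}^{\omega})^{\ell})$. The accumulated mixing error, which must be summed over the roughly $P(A_{m}^{\eta})^{-\ell}$ trials expected before the hazard, collapses (after a Cauchy--Schwarz-type two-scale choice of block size) to the $(1+(P(A_{n}^{\omega})/P(A_{m}^{\eta}))^{\ell})\psi(m)^{1/2}$ term, while the $(n\vee m)e^{-\frac{1}{2}\upsilon\kappa_{n,m}^{\omega,\eta}}$ correction comes from the overlap of the $(n\vee m)$-neighbourhoods of distinct trial times combined once again with (\ref{eq:estimate of measure of cylinders}).

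The main obstacle is extracting the exact exponential factor $e^{-\frac{1}{2}\upsilon\kappa_{n,m}^{\omega,\eta}}$ rather than a weaker $e^{-\upsilon(n\wedge m)/2}$ or $e^{-\upsilon\kappa/2}$ without the correct prefactor. This requires a careful estimate of $P(A_{n}^{\omega}\cap T^{-j}A_{n}^{\omega})$ and $P(A_{n}^{\omega}\cap T^{-j}A_{m}^{\eta})$ for $j$ near $\kappa_{n,m}^{\omega,\eta}$: the period conditions rule out many $j$ entirely, while (\ref{eq:estimate of measure of cylinders}) bounds the surviving terms via the length of the minimal cylinder carrying both the pattern and its shifted copy, with the factor $\tfrac{1}{2}$ emerging from this minimal length. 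The $\psi(m)^{1/2}$ rate likewise forces a second-moment rather than first-moment analysis, so as not to lose a power of $P(A_{m}^{\eta})^{-\ell}$ in summing per-trial mixing errors; coordinating this variance-type bound with the $\kappa$-dependent overlap bound is the delicate part of the argument.
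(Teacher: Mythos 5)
Your high-level architecture is the same as the paper's: discard the initial indices $k<\gamma(n\vee m)$, compare the dependent indicator process with an independent Bernoulli model whose success/hazard probabilities are $P(A_n^\omega)^\ell$ and $P(A_m^\eta)^\ell$, and use the elementary geometric law for the count of successes before the first hazard. (Two small inaccuracies: in the independent model where success and hazard at the same trial are independent, that count is geometric with parameter $p/(p+q-pq)$, not $p/(p+q)$ — this is Lemma \ref{lem3.1} and costs an extra $O(P(A_n^\omega)^\ell)$ term; and the proof of Theorem \ref{thm2.1} you propose to mimic is not a block-decoupling argument but a Chen--Stein comparison via Theorem 3 of \cite{AGG}, which is exactly what the paper reuses here.) The genuine gap is in the quantitative core: you never truncate the time horizon nor control the tail of $\tau_m^\eta$. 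The paper works on the window $\{\gamma(n\vee m),\dots,L\}$ with $L=[t\,P(A_m^\eta)^{-\ell}]$, $t=\epsilon^{-1/2}$, $\epsilon=\max\{e^{-\upsilon\kappa},\psi(m)\}$, and must show that $P\{\tau_m^\eta>L\}$ is small; in the dependent model this is not automatic and is obtained from the nonconventional Poisson approximation of \cite{KR} (Lemma \ref{lem:exp estimate}), giving $P\{\tau_m^\eta>L\}\approx e^{-t}$ up to controllable error. If, as in your sketch, the coupling is run only over ``roughly $P(A_m^\eta)^{-\ell}$ trials'', the event that the hazard has not yet occurred has probability of order $e^{-1}$, which destroys the bound; one must let the horizon exceed the mean by the factor $t$ and then pay accumulated per-trial errors proportional to $t$.

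Relatedly, your explanations of the two square roots are not the actual mechanism and, as described, would not deliver the stated rates. In the paper both $e^{-\upsilon\kappa/2}$ and $\psi(m)^{1/2}$ come from the single optimization $t=\epsilon^{-1/2}$: the Chen--Stein terms and the marginal-replacement terms are each $O\bigl(t\bigl(1+(P(A_n^\omega)/P(A_m^\eta))^\ell\bigr)\bigl((n\vee m)e^{-\upsilon\kappa}+\psi(n\vee m)\bigr)\bigr)$, the truncation error is $O(e^{-t})=O(\epsilon)$, and then $te^{-\upsilon\kappa}\le e^{-\upsilon\kappa/2}$, $t\psi(m)\le\psi(m)^{1/2}$. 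No second-moment or variance analysis is needed — the marginal per-trial errors are already $O(\psi\cdot P(A_m^\eta)^\ell)$ resp.\ $O(\psi\cdot P(A_n^\omega)^\ell)$, so summing $L$ of them costs only $t\psi(1+(P(A_n^\omega)/P(A_m^\eta))^\ell)$ and no power of $P(A_m^\eta)^{-\ell}$ is lost. Likewise the halving of the exponent on $e^{-\upsilon\kappa}$ has nothing to do with the length of the minimal cylinder carrying a pattern and its shifted copy: the full factor $e^{-\upsilon\kappa}$ appears in the $b_2$-term of the Chen--Stein bound (joint occurrences at nearby times are either empty, by definition of $\kappa^{\omega,\eta}_{n,m}$, or have probability $O(P\{X_{k,l}=1\}e^{-\upsilon\kappa})$ by $\psi$-mixing and (\ref{eq:estimate of measure of cylinders})), and it is degraded to $e^{-\upsilon\kappa/2}$ only through multiplication by $t$. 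Without the horizon truncation, the tail estimate for $\tau_m^\eta$, and this explicit optimization, your plan does not close.
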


\begin{example}
Let us consider an explicit example. Assume $\mathcal{A}=\{0,1,2\}$,
let $\phi:\Omega\rightarrow\mathbb{R}$ be H\" older continuous, and
assume $P$ is the Gibbs measure corresponding to $\phi$.
There exist constants $C>1$ and $\Pi\in\mathbb{R}$ such that for each
$\omega\in\Omega$ and $n\ge1$,
\begin{equation}
C^{-1}\le\frac{P(A_{n}^{\omega})}{\exp(-\Pi n+\sum_{j=0}^{n-1}\phi(T^{j}
\omega))}\le C\:.\label{eq:gibbs property}
\end{equation}
Additionally, it is well known that $P$ is $\psi$-mixing
and that $\psi(m){\rightarrow}0$ as $m\to\infty$ at an exponential speed
(see \cite{Bo}).

Assume $\ell=2$ and that $q_{1}(n)=n$ and $q_{2}(n)=2n$ for each
$n\in\mathbb{N}$. This implies that $\gamma(n)=2n$ for $n\in\mathbb{N}$.
Let $\omega,\eta\in\Omega$ be such that $\omega_{0}=1$, $\eta_{0}=2$,
and $\omega_{j}=\eta_{j}=0$ for each $j\ge1$. It is easy to see
that $\kappa^{\omega,\eta}_{n,n}=n$ for all $n\ge1$. Also, since $T^{j}
\omega=T^{j}\eta$
for $j\ge1$, it follows by (\ref{eq:gibbs property}) that
\[
\sup\{\frac{P(A_{n}^{\omega})}{P(A_{n}^{\eta})}\::\:n\ge1\}<
\infty\:.
\]
Hence, from Theorem \ref{thm:main thm} it follows that
\[
d_{TV}(\mathcal{L}(\Sig^{\om,\eta}_{n,n}),Geo(\frac{P(A_{n}^{\eta})^{
\ell}}{P(A_{n}^{\eta})^{\ell}+P(A_{n}^{\omega})^{\ell}}))
\rightarrow0\text{ exponentially fast as }n\rightarrow\infty\;.
\]
\end{example}
We now return to our general setup. The following corollary deals with
the limit behaviour of $\Sig^{\om,\eta}_{n,m(n)}$ for $P\times P$-typical
pairs $(\omega,\eta)\in\Omega\times\Omega$, where $|m(n)-n|=o(n)$. By $o(n)$ 
we mean an unspecified function $f:\mathbb{N}\rightarrow\mathbb{N}$ with 
$\frac {f(n)}{n}\rightarrow0$ as $n\rightarrow\infty$.
\begin{corollary}\label{cor:cor from main thm}
Let $\{m(n)\}_{n\ge1}\subset\mathbb{N}\setminus\{0\}$ be with $|m(n)-n|=o(n)$
 as $n\rightarrow\infty$. Assume that there exists $\beta\in(0,1)$
and $k\ge1$ such that $\psi(n)=O(\beta^{n})$ and $\gamma(n)=O(n^{k})$
for $n\ge1$. Then for $P\times P$-a.e. $(\omega,\eta)\in
\Omega\times\Omega$,
\[
\underset{n\to\infty}{\lim}\:d_{TV}(\mathcal{L}(\Sig^{\om,\eta}_{n,m(n)}),
Geo(\frac{P(A_{m(n)}^{\eta})^{\ell}}{P(A_{m(n)}^{\eta})^{\ell}+
P(A_{n}^{\omega})^{\ell}}))=0\:.
\]
In particular, if
\[
\lim_{n\to\infty}\frac {P(A_n^\om)}{P(A^\eta_{m(n)})}=\la
\]
then $\cL(\Sig_{n,m(n)}^{\om,\eta})$ converges in total variation as
$n\to\infty$ to the geometric distribution with the parameter 
$(1+\la^\ell)^{-1}$.
\end{corollary}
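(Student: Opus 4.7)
The plan is to deduce Corollary \ref{cor:cor from main thm} from Theorem \ref{thm:main thm} by showing that, under the stated hypotheses, each term on the right-hand side of the theorem's bound tends to $0$ as $n\to\infty$ for $P\times P$-a.e.\ pair $(\om,\eta)$, taken with $m=m(n)$.

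First I would collect the easy ingredients. Because $|m(n)-n|=o(n)$ forces $m(n)\to\infty$, the hypothesis $\psi(n)=O(\be^n)$ makes $\psi(m(n))^{1/2}$ decay exponentially in $n$, and the hypothesis $\gam(n)=O(n^k)$ bounds $\gam(n\vee m(n))$ polynomially, with $n\vee m(n)=n(1+o(1))$. What remains is to control the exponential factor $e^{-\frac12\upsilon\ka^{\om,\eta}_{n,m(n)}}$ and the amplification factor $1+(P(A_n^\om)/P(A_{m(n)}^\eta))^\ell$ against these polynomial pieces.

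Then I would establish two almost-sure facts. First, a linear lower bound $\ka^{\om,\eta}_{n,m(n)}\ge cn$ eventually: since $\ka^{\om,\eta}_{n,m}=\min\{\pi(A_n^\om,A_m^\eta),\pi(A_n^\om),\pi(A_m^\eta)\}$, each piece can be treated by Borel--Cantelli. For the self-return time, $\{\pi(A_n^\om)\le\al n\}$ means that the word $(\om_0,\ldots,\om_{n-1})$ admits some period $j\le\al n$; combining the $\psi$-mixing property with the uniform cylinder estimate (\ref{eq:estimate of measure of cylinders}) bounds its $P$-measure by an exponentially small summable quantity for $\al$ small, and Fubini with the analogous paired estimate handles $\pi(A_n^\om,A_{m(n)}^\eta)$ under $P\times P$. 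Second, a sub-exponential bound on the ratio $P(A_n^\om)/P(A_{m(n)}^\eta)$: the numerator is at most $e^{-\upsilon n}$ uniformly by (\ref{eq:estimate of measure of cylinders}), and Shannon--McMillan--Breiman (applicable since $\psi$-mixing implies ergodicity) gives $-\log P(A_{m(n)}^\eta)/m(n)\to h$ almost surely, so that with $|m(n)-n|=o(n)$ the ratio is $e^{o(n)}$ when $h$ is finite; the infinite-entropy case reduces to direct $\psi$-mixing estimates that still yield sub-exponential growth. Multiplying $e^{o(n)}$ by $e^{-\frac12\upsilon cn}$ and by the polynomial factors sends every term to zero, giving the first claim. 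The ``in particular'' assertion then follows because the geometric parameter $1/(1+(P(A_n^\om)/P(A_{m(n)}^\eta))^\ell)$ tends to $1/(1+\la^\ell)\in(0,1)$, and total variation distance between geometric laws is continuous in the parameter on compacts of $(0,1)$.

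The main obstacle is the Borel--Cantelli step for $\ka^{\om,\eta}_{n,m(n)}$: distilling from the $\psi$-mixing assumption and the uniform cylinder estimate (\ref{eq:estimate of measure of cylinders}) a summable exponential-in-$n$ bound on the $P$-probability that a random cylinder has an anomalously short self-return, and on the $P\times P$-probability that two independent random cylinders meet after a short shift, with constants uniform enough to survive Fubini. Once this is in hand, the rest is a matter of plugging the pieces into Theorem \ref{thm:main thm}.
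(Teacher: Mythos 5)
Your overall route is the paper's: feed $m=m(n)$ into Theorem \ref{thm:main thm}, kill the mixing term by the exponential decay of $\psi$, get an almost-sure linear lower bound on $\kappa^{\omega,\eta}_{n,m(n)}$ by Borel--Cantelli plus the cylinder estimate (\ref{eq:estimate of measure of cylinders}) and Fubini (the paper's Lemma \ref{lem:bound on periods}, which gives $\kappa\ge b(n)-c\ln b(n)$ and cites \cite{KR} for the self-return times), and control the amplification factor via Shannon--McMillan--Breiman. The ``in particular'' step via continuity of $\mathrm{Geo}(\rho)$ in $\rho$ is fine.

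However, your bound on the ratio $P(A_{n}^{\omega})/P(A_{m(n)}^{\eta})$ has a genuine flaw. You bound the numerator only by the uniform estimate $P(A_{n}^{\omega})\le e^{-\upsilon n}$ and the denominator from below by SMB, $P(A_{m(n)}^{\eta})\ge e^{-(h+\varepsilon)m(n)}$ eventually. This yields ratio $\le e^{(h-\upsilon+\varepsilon)n+o(n)}$, not $e^{o(n)}$: since SMB together with (\ref{eq:estimate of measure of cylinders}) forces $h\ge\upsilon$ (and typically $h>\upsilon$ strictly, $\upsilon$ being a crude constant), this bound is exponentially \emph{growing}. It is then not enough even against your $e^{-\upsilon c n/2}$ factor (nor against the best possible $e^{-\upsilon n/2}$, once $\ell(h-\upsilon)>\upsilon/2$), so the terms $(1+(P(A_n^\omega)/P(A_{m(n)}^\eta))^\ell)(n\vee m(n))e^{-\upsilon\kappa/2}$ are not shown to vanish. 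The repair is exactly what the paper does: apply SMB to \emph{both} cylinders, so that $-\frac1n\log P(A_n^\omega)\to h$ and $-\frac1n\log P(A_{m(n)}^\eta)\to h$ (here $|m(n)-n|=o(n)$ is used), and the exponential rates cancel ($\ell h-\ell h$), leaving a genuinely subexponential ratio that is crushed by $e^{-\upsilon\kappa/2}$ and the polynomial factors $\gamma(n\vee m(n))$, $n\vee m(n)$. Relatedly, your side remark that the infinite-entropy case ``reduces to direct $\psi$-mixing estimates that still yield sub-exponential growth'' is unsubstantiated; the argument (as in the paper) really lives in the finite-entropy setting, where the $\ell h-\ell h$ cancellation makes sense.
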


We observe that, in general (in fact, "usually"), the ratio 
$\frac {P(A_n^\om)}{P(A^\eta_{n})}$ will be unbounded for distinct
$\om,\eta\in\Om$, and so in order to obtain nontrivial limiting
geometric distribution it is necessary to choose cylinders $A_n^\om$
and $A_{m(n)}^\om$ with appropriate lengths. In order to have the ratio
$\frac {P(A_n^\om)}{P(A^\eta_{m(n)})}$
 bounded away from zero and infinity our condition $|m(n)-n|=o(n)$ is,
 essentially, necessary (at least, in the finite entropy case)
 which follows from the Shannon-McMillan-Breiman theorem (see \cite{Pe}).

A number theory (combinatorial) application of our results can be
described in the following way. Let $a,b\in(0,1)$ have base $k$
or continued fraction
expansions with digits $a_0,a_1,...$ and $b_0,b_1,...$,
respectively. For each point $\om\in(0,1)$ with base $k$ or continued fraction
 expansion with digits $\om_0,\om_1,...$ let $\tau^a_m(\om)$ be
the smallest $l\geq 0$ such that the $m$-string $a_0,a_1,...,a_{m-1}$
is repeated by the sequence $\om$ starting from all places $q_i(l),\,
i=1,...,\ell$. Now, count the number $\cN^{b,a}_{n,m}(\om)$ of those
$j<\tau^a_m(\om)$
for which the $n$-string $b_0,b_1,...,b_{n-1}$ is repeated starting from
all places $q_i(j),\, i=1,...,\ell$. Considering on $[0,1)$ the
Lebesgue measure we conclude from our results that for almost all pairs $a,b$
the distribution of $\cN^{b,a}_{n,n}$ in the base $k$ expansion case converges 
in total variation as $n\to\infty$ to the geometric distribution with the 
parameter $1/2$. In the continued fraction case let $G$ be the Gauss 
measure $G(\Gam)=\frac 1{\ln 2}\int_\Gam\frac {dx}{1+x}$ and denote by
$[c_0,c_1,...,c_{n-1}]$ the interval of points $\om\in(0,1)$ having 
continued fraction expansion starting with $c_0,...,c_{n-1}$. Then assuming
that
\[
\lim_{n\to\infty}\frac {G[b_0,...,b_{n-1}]}{G[a_0,...,a_{m(n)-1}]}=\la
\]
and that $\ka^{b,a}_{n,m(n)}\to\infty$ fast enough,
we obtain that the distribution of $\cN^{b,a}_{n,m(n)}$ converges in total
variation to the geometric distribution with the parameter $(1+\la^\ell)^{-1}$.

   \section{Multiple returns for a stationary process}\label{sec3}
\setcounter{equation}{0}

\subsection{A lemma}\label{subsec3.1}
We will need the following result which is, essentially, an exercise in
elementary probability.
\begin{lemma}\label{lem3.1}
Let $Y=\big\{ Y_{k,l}:\, k\geq 0$ and $l\in\{ 0,1\}\big\}$ be independent
Bernoulli random variables such that
$1>P\{ Y_{k,0}=1\}=p=1-P\{ Y_{k,0}=0\}>0$ and $1>P\{ Y_{k,1}=1\}=q=1-
P\{ Y_{k,1}=0\}>0$.
Set $\tau=\min\{ l\geq 0:\, Y_{l,0}=1\}$. Then $S=\sum_{l=0}^{\tau-1}Y_{l,1}$
is a geometric random variable with the parameter $p(p+q-pq)^{-1}$.
\end{lemma}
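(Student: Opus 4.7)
The approach I would take is to condition on $\tau$. Since $\tau = \min\{l \geq 0 : Y_{l,0} = 1\}$ depends only on the family $\{Y_{k,0}\}_{k\geq 0}$, it is independent of $\{Y_{k,1}\}_{k\geq 0}$, and its law is $\text{Geo}(p)$, that is, $P\{\tau = n\} = (1-p)^n p$ for each $n \in \bbN$. Conditional on $\{\tau = n\}$, the sum $S$ is then a sum of $n$ independent Bernoulli$(q)$ variables, so $\cL(S \mid \tau = n) = \text{Bin}(n, q)$. Combining these two facts gives, for each $k \geq 0$,
\[
P\{S = k\} = \sum_{n \geq k} (1-p)^n p \binom{n}{k} q^k (1-q)^{n-k}.
\]

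The remaining step is a generating-function computation. Factoring $p q^k (1-p)^k$ out and substituting $m = n - k$ reduces the sum to $\sum_{m \geq 0} \binom{m+k}{k} [(1-p)(1-q)]^m$, which by the standard negative-binomial identity equals $(1-(1-p)(1-q))^{-k-1} = (p+q-pq)^{-k-1}$. Therefore
\[
P\{S = k\} = \frac{p}{p+q-pq}\left(\frac{q(1-p)}{p+q-pq}\right)^k = \rho(1-\rho)^k,
\]
where $\rho = p/(p+q-pq)$ and $1-\rho = q(1-p)/(p+q-pq)$, as required.

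There is no real obstacle; the only point requiring a sentence of justification is the independence of $\tau$ and $(Y_{l,1})_{l\geq 0}$, which is immediate from the product structure of the joint distribution. A more conceptual alternative that avoids the summation is a three-outcome race argument: at each step $l$ exactly one of the events $\{Y_{l,0}=1\}$ (halt, probability $p$), $\{Y_{l,0}=0,\,Y_{l,1}=1\}$ (increment and continue, probability $(1-p)q$), or $\{Y_{l,0}=0,\,Y_{l,1}=0\}$ (do nothing, probability $(1-p)(1-q)$) occurs, independently across $l$. Thinning out the third type of step leaves an i.i.d.\ two-valued sequence in which the conditional probability of halting is $p/(p+(1-p)q) = p/(p+q-pq)$, and $S$ is by construction the number of "increment" steps before the first halt, hence geometric with that parameter. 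Either route closes the proof.
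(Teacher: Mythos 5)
Your proposal is correct and follows essentially the same route as the paper: conditioning on $\tau$ (geometric, independent of the $Y_{l,1}$), identifying the conditional law of $S$ as binomial, and summing via the negative-binomial identity to get the parameter $p(p+q-pq)^{-1}$. The alternative "race" argument you sketch is a nice shortcut, but the main computation matches the paper's proof step for step.
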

\begin{proof} Clearly
\[
\{ S=m\}=\bigcup_{n=m}^\infty\{\tau=n\}\cap\{\sum_{l=0}^{n-1}Y_{l,1}=m\}.
\]
Since the processes $\{ Y_{l,1}\}_{l\geq 0}$ and $\{ Y_{l,0}\}_{l\geq 0}$
are independent of each other, the events $\{\tau=n\}$ and
$\{\sum_{l=0}^{n-1}Y_{l,1}=m\}$ are independent, as well. Moreover,
$\tau$ and $\sum_{l=0}^{n-1}Y_{l,1}$ have geometric and binomial distributions,
respectively. Thus,
\begin{eqnarray}\label{3.1}
&P\{ S=m\}=\sum_{n=m}^\infty P\{\tau=n\}P\{\sum_{l=0}^{n-1}Y_{l,1}=m\}\\
&=\sum_{n=m}^\infty(1-p)^np\binom{n}{m}q^m(1-q)^{n-m}\nonumber\\
&=q^m(1-p)^mp\sum_{n=0}^\infty\binom{n+m}{m}(1-p)^n(1-q)^n.\nonumber
\end{eqnarray}
Set $r=(1-p)(1-q)$ then
\begin{equation}\label{3.2}
\sum_{n=0}^\infty\binom{n+m}{n}r^n(1-r)^{m+1}=1
\end{equation}
since we are summing the probability density (mass) function of the negative
binomial distribution with the parameters $(m+1,r)$. Since $\binom {n+m}{n}=
\binom {n+m}{m}$ we obtain from (\ref{3.1}) and (\ref{3.2}) that
\begin{equation}\label{3.3}
P\{ S=m\}=\big(\frac {q(1-p)}{1-r}\big)^m\frac {p}{1-r}=\big(\frac {q(1-p)}
{p+q-pq}\big)^m\frac p{p+q-pq}
\end{equation}
and taking into account that $1-\frac {q(1-p)}{p+q-pq}=\frac p{p+q-pq}$
the proof of the lemma is complete.
\end{proof}

\subsection{Proof of Theorem \ref{thm2.1}}\label{subsec3.2}
Let $X'_{n,\al},\, n=0,1,...,\,\al=0,1$ be a sequence of independent random
variables such that $X'_{n,\al}$ has the same distribution as $X_{n,\al}$.
Set $\tau_M=\min(\tau,M)$,
\[
S'_M=\sum_{n=0}^{M-1}X'_{n,1},\,\tau'=\min\{ n\geq 0:\, X'_{n,0}=1\}\,\,
\mbox{and}\,\,\tau'_M=\min(\tau', M).
\]
Next, Let $Y_{n,0}$ and $Y_{n,1},\, n=0,1,...$ be two independent of each
other sequences of i.i.d. random variables such that
\begin{equation}\label{3.4}
P\{ Y_{n,\al}=1\}=Q(\Gam_\al)^\ell=1-P\{ Y_{n,\al}=0\},\,\al=0,1.
\end{equation}
We can and will assume that all above random variables are defined on the
same (sufficiently large) probability space. Set also
\[
S^*_M=\sum_{n=0}^{M-1}Y_{n,1},\,\tau^*=\min\{ n\geq 0:\, Y_{n,0}=1\}\,\,
\mbox{and}\,\,\tau^*_M=\min(\tau^*,M).
\]

Now observe that $S^*_{\tau^*}$ has by Lemma \ref{lem3.1} the geometric
distribution with the parameter
\begin{equation}\label{3.5}
\varrho=\frac {Q(\Gam_0)^\ell}{Q(\Gam_0)^\ell+Q(\Gam_1)^\ell
(1-Q(\Gam_0)^\ell)}>\rho.
\end{equation}
Next, we can write
\begin{equation}\label{3.6}
d_{TV}(\cL(S_\tau),\,\mbox{Geo}(\rho))\leq A_1+A_2+A_3+A_4+A_5
\end{equation}
where $A_1=d_{TV}(\cL(S_\tau),\,\cL(S_{\tau_M}))$,
 $A_2=d_{TV}(\cL(S_{\tau_M}),\,\cL(S'_{\tau'_M}))$,
 $A_3=d_{TV}(\cL(S'_{\tau'_M}),\,\cL(S^*_{\tau^*_M}))$ ,
 $A_4=d_{TV}(\cL(S^*_{\tau^*_M}),\,\cL(S^*_{\tau^*}))$ and
 $A_5=d_{TV}(\mbox{Geo}(\varrho),\,\mbox{Geo}(\rho))$.

 Introduce random vectors
 $\bfX_{M,\al}=\{ X_{n,\al},\, 0\leq n\leq M\},\,\al=0,1$, $\bfX_M=
 \{\bfX_{M,0},\,\bfX_{M,1}\}$, $\bfX'_{M,\al}=\{ X'_{n,\al},\, 0\leq n\leq M\},
 \,\al=0,1$, $\bfX'_M=\{\bfX'_{M,0},\,\bfX'_{M,1}\}$, $\bfY_{M,\al}=
 \{ Y_{n,\al},\, 0\leq n\leq M\},\,\al=0,1$ and $\bfY_M=\{\bfY_{M,0},\,
 \bfY_{M,1}\}$. Observe that the event $\{ S_\tau\ne S_{\tau_M}\}$ can
 occure only if $\tau>M$. Also, we can write $\{\tau>M\}=\{ X_{n,0}=0\,\,
 \mbox{for all}\,\, n=0,1,...,M\}$ and $\{\tau'>M\}=\{ X'_{n,0}=0\,\,
 \mbox{for all}\,\, n=0,1,...,M\}$ Hence,
 \begin{eqnarray}\label{3.7}
 &A_1\leq P\{\tau>M\}\leq P\{\tau'>M\}+|P\{ X_{n,0}=0\,\,\mbox{for}\,\,
 n=0,1,...,M\}\\
 &-P\{ X'_{n,0}=0\,\,\mbox{for}\,\, n=0,1,...,M\}|\leq P\{\tau'>M\}+
 d_{TV}(\cL(\bfX_{M,0},\,\cL(\bfX'_{M,0}))\nonumber
 \end{eqnarray}
 and similarly,
 \begin{equation}\label{3.8}
 P\{\tau'>M\}\leq P\{\tau^*>M\}+d_{TV}(\cL(\bfX'_{M,0},\,\cL(\bfY_{M,0})).
 \end{equation}
 Since $Y_{n,0},\, n=0,1,...$ are i.i.d. random variables we obtain that
 \begin{equation}\label{3.9}
 P\{\tau^*>M\}=(P\{ Y_{0,0}=0\})^{M+1}=(1-Q(\Gam_0)^\ell)^{M+1}.
 \end{equation}

 Next, we claim that
 \begin{eqnarray}\label{3.10}
 &d_{TV}(\cL(\bfX'_{M,0}),\,\cL(\bfY_{M,0}))\leq d_{TV}(\cL(\bfX'_M),\,
 \cL(\bfY_M))\\
 &\leq\sum_{0\leq n\leq M,\al=0,1}d_{TV}(\cL(X'_{n,\al}),\,\cL(Y_{n,\al})).
 \nonumber \end{eqnarray}
 The first inequality above is clear and the second one holds true in view of
 the following general argument. Let $\mu_1,\mu_2$ and $\tilde\mu_1,
 \tilde\mu_2$ be Borel probability measures on Borel measurable spaces $\cX$
 and $\tilde\cX$, respectively. Then for any product Borel sets
 $U_i\times\tilde U_i\subset\cX\times\tilde\cX,\, i=1,...,k$ such that
 $U_i\subset\cX,\,\tilde U_i\subset\tilde\cX,\, i=1,...,k$ and $U_1,...,U_k$
 are disjoint we have
 \begin{equation*}
 \big\vert\mu_1\times\tilde\mu_1(\cup_{i=1}^k(U_i\times
 \tilde U_i))-\mu_2\times\tilde\mu_2(\cup_{i=1}^k
 (U_i\times\tilde U_i))\big\vert\leq B_1+B_2
 \end{equation*}
 where
 \[
 B_1=\big\vert\sum_{i=1}^k\mu_1(U_i)(\tilde\mu_1(\tilde U_j)-\tilde\mu_2
 (\tilde U_j))\big\vert 
 \]
 and
 \[
 B_2=\big\vert\sum_{i=1}^k\tilde\mu_2(\tilde U_j)(\mu_1( U_i)-\mu_2(U_i))
 \big\vert.
 \]
 Since $U_1,...,U_k$ are disjoint then
 \[
 B_1\leq\sum_{i=1}^k\mu_1(U_i)|\tilde\mu_1(\tilde U_j)-\tilde\mu_2
 (\tilde U_j)|\leq d_{TV}(\tilde\mu_1,\tilde\mu_2) 
 \]
 and
 \[
 B_2\leq\max((\mu_1-\mu_2)(H_+),\,(\mu_2-\mu_1)(H_-))\leq d_{TV}(\mu_1,\mu_2).
 \]
 where $\cX=H_+\cup H_-$ is the Hahn decompositions of $\cX$ into positive
 and negative part with respect to the signed measure $\mu_1-\mu_2$. Thus,
 \[
 |\mu_1\times\tilde\mu_1(W)-\mu_2\times\tilde\mu_2(W)|\leq d_{TV}(\mu_1,
 \mu_2)+d_{TV}(\tilde\mu_1,\tilde\mu_2)
 \]
 for any $W\subset\cX\times\tilde\cX$ having the form $W=\cup_{1\leq i\leq k}
 (U_i\times\tilde U_i)$ with disjoint Borel $U_1,...,U_k\subset\cX$ and 
 arbitrary Borel $\tilde U_1,...,\tilde U_k\subset\tilde\cX$. But any finite
 union of disjoint Borel subsets of $\cX\times\tilde\cX$ can be represented in
  this form, whence the above inequality holds true for all such unions which
  form an algebra of sets. This inequality is preserved under monotone limits
 of sets, and so it remains true for any Borel set $W\subset\cX\times\tilde\cX$
 yielding (\ref{3.10}) by induction on $M$.

 Now,
 \begin{eqnarray}\label{3.11}
 &d_{TV}(\cL(X'_{n,\al},\,\cL(Y_{n,\al}))=|P\{ X'_{n,\al}=1\}-
 P\{ Y_{n,\al}=1\}|\\
 &=|P\{\xi_{q_i(n)}\in\Gam_\al\,\,\mbox{for}\,\, i=1,...,\ell\}-
 Q(\Gam_\al)^\ell|\leq\big((1+\psi(q(n)))^\ell-1\big)Q(\Gam_\al)^\ell
 \nonumber\end{eqnarray}
 where the last inequality follows from Lemma 3.2 in \cite{KR} and it is
 based on standard properties of the $\psi$-mixing coefficient. For any
 positive integers $m<M$ we can write
 \begin{equation}\label{3.12}
 d_{TV}(\cL(\bfX'_M),\,\cL(\bfY_M))\leq(Q(\Gam_0)^\ell+Q(\Gam_1)^\ell))
 \sum_{n=0}^M\big((1+\psi(q(n)))^\ell-1\big).
 \end{equation}
 Observe that
 \begin{equation}\label{3.13}
 d_{TV}(\cL(\bfX_{M,0}),\cL(\bfX'_{M,0}))\leq d_{TV}(\cL(\bfX_{M}),
 \cL(\bfX'_{M}))\,\,\mbox{and}\,\, A_2\leq d_{TV}(\cL(\bfX_{M}),\cL(\bfX'_{M})).
 \end{equation}
 The first inequality in (\ref{3.13}) is clear and the second one follows
 from the fact that $S_{\tau_M}=f(\bfX_M)$ and $S'_{\tau'_M}=f(\bfX'_M)$
 for a certain function $f:\,\{ 0,1\}^{2(M+1)}\to\{ 0,1,...,M\}$. We will
 estimate next $d_{TV}(\cL(\bfX_M),\,\cL(\bfX'_M))$ relying on \cite{AGG}
 warning the reader first that in Section \ref{sec2} we defined $d_{TV}$ in
  a more standard way than in \cite{AGG} where this quantity is multiplied
  by the factor 2, and so we adjust estimates from there accordingly.

  By Theorem 3 in \cite{AGG},
  \begin{equation}\label{3.14}
  d_{TV}(\cL(\bfX_M),\,\cL(\bfX'_M))\leq 2b_1+2b_2+2b_3+2\sum_{0\leq n\leq M,
  \al=0,1}p^2_{n,\al}
  \end{equation}
  where for $\al=0,1$,
  \begin{equation}\label{3.15}
  p_{n,\al}=P\{ X_{n,\al}=1\}=P\{\xi_{q_i(n)}\in\Gam_\al\,\,\mbox{for}\,\,
  i=1,...,\ell\}\leq (1+\psi(q(n)))^\ell Q(\Gam_\al)^\ell
  \end{equation}
  with the latter inequality satisfied by Lemma 3.2 in \cite{KR}. In order to
  define $b_1,b_2$ and $b_3$ we introduce the distance between positive 
  integers
  \[
  \del(k,l)=\min_{1\leq i,j\leq\ell}|q_i(k)-q_j(l)|
  \]
  and the set
  \[
  B^{M,R}_{n,\al}=\{(l,\al),\,(l,1-\al):\, 0\leq l\leq M,\,\del(l,n)\leq R\}
  \]
  (which, in fact, does not depend on $\al$)
  where an integer $R>0$ is another parameter. Set also
  $I_M=\{(n,\al):\, 0\leq n\leq M,\,\al=0,1\}$. Then
  \begin{equation}\label{3.16}
  b_1=\sum_{(n,\al)\in I_M}\sum_{(l,\be)\in B_{n,\al}^{M,R}}p_{n,\al}
  p_{l,\be},
  \end{equation}
  \begin{equation}\label{3.17}
  b_2=\sum_{(n,\al)\in I_M}\sum_{(n,\al)\ne(l,\be)\in B_{n,\al}^{M,R}}
  p_{(n,\al),(l,\be)},
  \end{equation}
 where $p_{(n,\al),(l,\be)}=E(X_{n,\al}X_{l,\be})$, and
 \begin{equation}\label{3.18}
 b_3=\sum_{(n,\al)\in I_M}s_{n,\al}
 \end{equation}
 where
 \[
 s_{n,\al}=E\big\vert E\big(X_{n,\al}-p_{n,\al}|\sig\{ X_{l,\be}:\,(l,\be)\in
 I_M\setminus B^{M,R}_{n,\al}\}\big)\big\vert.
 \]
  Since the functions $q_i,\, i=1,...,\ell$ are strictly increasing, for
  any $i,j,n$ and $k$ there exists at most one $l$ such that $q_i(n)-q_j(l)
  =k$. It follows from here that
  \begin{equation}\label{3.19}
  |B^{M,R}_{n,\al}|\leq 8\ell^2(R+1)
  \end{equation}
  where $|U|$ denotes the cardinality of a finite set $U$. It follows from
  (\ref{3.15}), (\ref{3.16}) and (\ref{3.19}) that
  \begin{equation}\label{3.20}
  b_1\leq 4(M+1)\ell^2(R+1)(1+\psi(1))^{2\ell}(Q(\Gam_0)^{2\ell}+
  Q(\Gam_1)^{2\ell}).
  \end{equation}

  Next,
  \begin{equation}\label{3.21}
  p_{(n,\al),(l,\be)}=P\{ X_{n,\al}=X_{l,\be}=1\}=0
  \end{equation}
  if $n=l$ and $\be=1-\al$ since $\Gam_0\cap\Gam_1=\emptyset$. If $n\ne l$ then
   assuming, for instance, that $l>n$ we obtain by Lemma 3.2 in \cite{KR} that
  \begin{eqnarray}\label{3.22}
  &p_{(n,\al),(l,\be)}=P\{ X_{n,\al}=X_{l,\be}=1\}\\
  &\leq P\{ X_{n,\al}=1\,\,\mbox{and}\,\,\xi_{q_\ell(l)}\in\Gam_\be\}
  \leq(1+\psi(1))^{\ell+1}Q(\Gam_\al)^\ell Q(\Gam_\be).\nonumber
  \end{eqnarray}
  Hence,
  \begin{equation}\label{3.23}
  b_2\leq 2(M+1)\ell^2(R+1)(1+\psi(1))^{\ell+1}( Q(\Gam_0)^\ell
  +Q(\Gam_1)^\ell)(Q(\Gam_0)+Q(\Gam_1)).
  \end{equation}

  Next, we claim that
  \begin{eqnarray}\label{3.24}
  &s_{n,\al}\leq 2^{2(\ell+2)}(2-(1+\psi(R))^{\ell+1})^{-2}\psi(R)E|X_{n,\al}
  -p_{n,\al}|\\
  &\leq 2^{2\ell+5}(2-(1+\psi(R))^{\ell+1})^{-2}\psi(R)p_{n,\al}
  \nonumber\end{eqnarray}
  where $s_{n,\al}$ is the same as in (\ref{3.18}). Indeed, let $\cG$ be the
  $\sig$-algebra generated by all $\xi_{q_i(l)},\, i=1,...,\ell$ such that
  $(l,0)\in I_M\setminus B^{M,R}_{n,\al}$ and $\cH$ be the $\sig$-algebra
  generated by $\xi_{q_i(n)},\, i=1,...,\ell$. Since $|q_i(n)-q_j(l)|>R$
  for all $i,j=1,...,\ell$ and $l$ such that $(l,0)\in I_M\setminus
  B^{M,R}_{n,\al}$ we conclude from Lemma 3.3 in \cite{KR} that
  \begin{equation}\label{3.25}
  \psi(\cG,\cH)\leq 2^{2(\ell+2)}\psi(R)(2-(1+\psi(R))^{\ell+1})^{-2}
  \end{equation}
  provided $\psi(R)<2^{\frac 1{\ell+1}}-1$ which we assume. Since
  $\sig\{ X_{l,\be}:\,(l,\be)\in I_M\setminus B^{M,R}_{n,\al}\}\subset\cG$
  and $\sig\{ X_{n,\al}\}\subset\cH$ we obtain (\ref{3.24}) from (\ref{2.1})
  and (\ref{3.25}). Now by (\ref{3.15}), (\ref{3.18}) and (\ref{3.24}),
  \begin{equation}\label{3.26}
  b_3\leq 2^{2\ell+5}(M+1)(2-(1+\psi(R))^{\ell+1})^{-2}(1+\psi(1))^\ell\psi(R)
  (Q(\Gam_0)^\ell+Q(\Gam_1)^\ell)).
  \end{equation}

  Next, in the same way as in the estimate of $A_2$ we conclude that
  \begin{equation}\label{3.27}
  A_3\leq d_{TV}(\cL(\bfX'_M),\cL(\bfY_M))
  \end{equation}
  which together with (\ref{3.12}) estimates $A_3$.

  As in the estimate of $A_1$ we see that
  \begin{equation}\label{3.28}
  A_4\leq P\{\tau^*>M\}\leq (1-Q(\Gam_0)^\ell)^{M+1}
  \end{equation}
  since $Y_{n,0},\, n=0,1,...$ are i.i.d. random variables.

  Since $\varrho>\rho$ we obtain
  \begin{eqnarray}\label{3.29}
  &A_5\leq\sum_{k=0}^\infty |\varrho(1-\varrho)^k-\rho(1-\rho)^k|\leq
  2\sum_{k=1}^\infty((1-\rho)^k-(1-\varrho)^k)\\
  &=2(1-\rho)\rho^{-1}-2(1-\varrho)\varrho^{-1}=
  2\frac {\varrho-\rho}{\rho\varrho}=2Q(\Gam_1)^\ell. \nonumber
  \end{eqnarray}
 Collecting (\ref{3.6})--(\ref{3.15}), (\ref{3.20}). (\ref{3.23}),
 (\ref{3.24}) and (\ref{3.26})--(\ref{3.29}) we derive (\ref{2.5}).
  \qed

 In order to prove Corollary \ref{cor2.2} we rely on the estimate (\ref{2.5})
 with $\Gam_0=\Gam_N$ and $\Gam_1=\Del_N$ choosing $M=M_N\to\infty$ and
 $R=R_N\to\infty$ as $N\to\infty$ so that
 \begin{eqnarray}\label{3.30}
 &\lim_{N\to\infty}M_NQ(\Gam_N)^\ell=\infty,\,\lim_{N\to\infty}Q(\Gam_N)^\ell
 \sum_{n=0}^{M_N}\psi(q(n))=0,\\
 &\lim_{N\to\infty}M_N\psi(R_N)Q(\Gam_N)^\ell=0\,\,\mbox{and}\,\,
 \lim_{N\to\infty}M_NR_NQ(\Gam_N)^{\ell+1}=0\nonumber
 \end{eqnarray}
 which is clearly possible since $\psi(n)\to 0$ and $q(n)\to\infty$ as
 $n\to\infty$. This together with (\ref{2.5}) yields (\ref{2.7}).
 \qed

 \subsection{Returns until a fixed time}\label{subsec3.3}
 Now we will prove Theorem \ref{thm2.3}. By Theorem 1 in \cite{AGG},
 \begin{equation}\label{3.31}
 d_{TV}(\cL(S_N),\,\mbox{Pois}(ES_N))\leq b_1+b_2+b_3
 \end{equation}
 where $b_1,b_2$ and $b_3$ are defined by (\ref{3.16})--(\ref{3.18}) with
 the sums there taken only in $n$ and $l$ (but not in $\al$), taking $N$ in
 place of $M$ and replacing
 there $p_{n,\al}$ by $p_n=P\{ X_n=1\}$, $p_{(n,\al),(l,\be)}$ by
 $p_{nl}=E(X_nX_l)$, $B_{n,\al}^{M,R}$ by $B_n^{N,R}=\{ l:\, 0\leq l\leq N,\,
 \del(l,n)\leq R\}$ and $s_{n,\al}$ by $s_n=E|E(X_n-p_n|\sig\{ X_l:\,
 l\in I_N\setminus B^{N,R}_n)|$ where $I_N=\{ 0,1,...,N\}$. Then all right
 hand side estimates (\ref{3.20}), (\ref{3.23}) and (\ref{3.26}) remain valid
 but we will have to consider only one set $\Gam_0=\Gam$ (deleting terms
 with $Q(\Gam_1)$ there) and in order to complete the proof of Theorem
 \ref{2.3} it remains to show that
 \begin{equation}\label{3.32}
 d_{TV}\big(\mbox{Pois}(ES_N),\,\mbox{Pois}(NQ(\Gam)^\ell)\big)\leq
 \wp\big(CQ(\Gam)^\ell\sum_{n=0}^N\psi(q(n))\big).
 \end{equation}

 Indeed, for any $\la_1,\la_2>0$ by Lemma 3.4 in \cite{KR},
 \begin{eqnarray}\label{3.33}
 &d_{TV}(\mbox{Pois}(\la_1),\,\mbox{Pois}(\la_2))\\
 &\leq\sum_{n=0}^\infty |e^{-\la_1}\frac {\la_1^n}{n!}-e^{-\la_2}
 \frac {\la_2^n}{n!}|\leq 2|\la_1-\la_2|e^{-|\la_1-\la_2|}.\nonumber
 \end{eqnarray}
 Now,
 \[
 ES_N=\sum_{n=0}^{N-1}EX_n\,\,\mbox{and}\,\, EX_n=P\{ X_n=1\}=P\{\xi_{q_i(n)}
 \in\Gam\,\,\mbox{for}\,\, i=1,...,\ell\}.
 \]
 By Lemma 3.2 in \cite{KR} (which is an easy application of the definitions
 (\ref{2.1}) and (\ref{2.2}) of the $\psi$-dependence coefficient) together
 with stationarity of the sequence $\{\xi_n\}$ we obtain
 \begin{equation}\label{3.34}
 |P\big\{\cap_{i=1}^\ell\{\xi_{q_i(n)}\in\Gam\}\big\}-Q(\Gam)^\ell|\leq
 \big((1+\psi(q(n)))^\ell-1\big)Q(\Gam)^\ell.
 \end{equation}
 Hence,
 \begin{equation}\label{3.35}
 |ES_N-NQ(\Gam)^\ell|\leq Q(\Gam)^\ell\sum_{n=0}^N\big((1+\psi(q(n)))^\ell
 -1\big)\leq CQ(\Gam)^\ell\sum_{n=0}^N\psi(q(n)),
 \end{equation}
 where $C>0$ does not depend on $N$ or $\Gam$, and (\ref{3.32}) follows.
 \qed

 In order to prove Corollary \ref{cor2.4} we rely on (\ref{2.9}) with
 $\Gam=\Gam_N$ and choosing $R=R_N\to\infty$ as $N\to\infty$ so that
 $\lim_{N\to\infty}R_NQ(\Gam_N)=0$. In view of (\ref{2.10}) and
 taking into account that $\psi(n)\to 0$ and $q(n)\to\infty$ as $n\to\infty$
 we obtain that
 \[
 \lim_{N\to\infty}\wp\big(CQ(\Gam_N)^\ell\sum_{n=0}^N\psi(q(n))\big)=0
 \]
 which together with (\ref{2.9}) yields (\ref{2.11}).
    \qed

 \section{Returns to cylinder sets for shifts}\label{sec4}\setcounter{equation}
{0}

\subsection{Preliminary lemmas and Corollary \ref{cor:cor from main thm}}
\label{subsec4.1}
First we prove Corollary \ref{cor:cor from main thm} while relaying
on Theorem \ref{thm:main thm}, for which we need the following lemma. In what 
follows $\{m(n)\}_{n\ge1}$ is a sequence of positive integers with $|m(n)-n|
=o(n)$ as $n\rightarrow\infty$. For $n\ge1$ we write $b(n)=n\wedge m(n)$.
\begin{lemma}
\label{lem:bound on periods}Set $c=3\upsilon^{-1}$ and let $\mathcal{E}$
be the set of all $(\omega,\eta)\in\Omega\times\Omega$ for which
there exists $N=N(\omega,\eta)\ge1$ such that $\kappa^{\omega,\eta}_{n,m(n)}
\ge
b(n)-c\ln b(n)$
for all $n\ge N$, then $P\times P(\Omega^{2}\setminus
\mathcal{E})=0$.
\end{lemma}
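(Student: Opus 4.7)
The plan is to reduce to the Borel--Cantelli lemma. Writing $E_{n}=\{(\omega,\eta):\kappa^{\omega,\eta}_{n,m(n)}<b(n)-c\ln b(n)\}$ and $k_{n}=\lfloor b(n)-c\ln b(n)\rfloor$, one has $\Omega^{2}\setminus\mathcal{E}=\limsup_{n}E_{n}$, so it suffices to show $\sum_{n}P\times P(E_{n})<\infty$. Because $\kappa^{\omega,\eta}_{n,m(n)}$ is the minimum of three quantities, I would split $E_{n}\subseteq F_{n}^{(1)}\cup F_{n}^{(2)}\cup F_{n}^{(3)}$ with $F_{n}^{(1)}=\{\pi(A_{n}^{\omega})\le k_{n}\}$, $F_{n}^{(2)}=\{\pi(A_{m(n)}^{\eta})\le k_{n}\}$, and $F_{n}^{(3)}=\{\pi(A_{n}^{\omega},A_{m(n)}^{\eta})\le k_{n}\}$, and bound each separately. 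The hypothesis $|m(n)-n|=o(n)$ forces $b(n)\sim n$, so any bound of order $b(n)^{-2}$ will be summable.

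For the self-term $F_{n}^{(1)}$ I would expand it as $\bigcup_{j=1}^{k_{n}}B_{n,j}$, where $B_{n,j}=\{\omega:\omega_{i}=\omega_{i+j}\text{ for }0\le i\le n-j-1\}$ is the event that the initial $n$-word of $\omega$ is $j$-periodic. The key observation is that once $\omega|_{[0,j)}=c$ is fixed, the tail $\omega|_{[j,n)}$ is forced to be the $(n-j)$-word obtained by truncating the periodic extension of $c$. This realises $B_{n,j}\cap[c]$ as the intersection of the $j$-cylinder $[c]$ with an adjacent (zero-gap) $(n-j)$-cylinder, so a single application of $\psi$-mixing together with (\ref{eq:estimate of measure of cylinders}) on the $(n-j)$-piece yields $P(B_{n,j}\cap[c])\le(1+\psi(1))P([c])e^{-\upsilon(n-j)}$. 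Summing over $c\in\mathcal{A}^{j}$ gives $P(B_{n,j})\le(1+\psi(1))e^{-\upsilon(n-j)}$, and a geometric sum over $j\le k_{n}$ together with $n-k_{n}\ge c\ln b(n)$ and $\upsilon c=3$ produces $P(F_{n}^{(1)})\le Cb(n)^{-3}$. The estimate for $F_{n}^{(2)}$ is identical after replacing $n$ by $m(n)$, since $m(n)-k_{n}\ge c\ln b(n)$ as well.

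For the cross-term $F_{n}^{(3)}$, at each fixed $0\le k\le k_{n}$ the event $A_{n}^{\omega}\cap T^{-k}A_{m(n)}^{\eta}\ne\emptyset$ is the deterministic matching condition $\omega|_{[k,k+L)}=\eta|_{[0,L)}$ with $L=\min(n-k,m(n))\ge c\ln b(n)$; under $P\times P$, Fubini and $T$-invariance reduce its probability to $\sum_{w\in\mathcal{A}^{L}}P([w])^{2}\le\max_{w}P([w])\le e^{-\upsilon L}\le b(n)^{-3}$. Summing over the two orientations in the definition of $\pi(\cdot,\cdot)$ and the $O(b(n))$ admissible values of $k$ yields $P\times P(F_{n}^{(3)})\le Cb(n)^{-2}$, which dominates the total; combined with the self-term estimates this gives $P\times P(E_{n})=O(b(n)^{-2})$ and Borel--Cantelli closes the proof. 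The hard point is keeping the $\psi$-mixing argument for $B_{n,j}$ efficient: a naive iteration of $\psi(1)$ across the $\lfloor n/j\rfloor$ consecutive length-$j$ blocks produces an uncontrolled factor $(1+\psi(1))^{n/j}$ for small $j$, whereas bundling all $n-j$ forced coordinates into one long cylinder and hitting it with (\ref{eq:estimate of measure of cylinders}) invokes the $\psi$-mixing only once and yields the clean bound $e^{-\upsilon(n-j)}$.
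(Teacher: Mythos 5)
Your proposal is correct and follows essentially the same route as the paper: a union bound over the at most $b(n)-c\ln b(n)$ admissible shifts, the cylinder-measure decay $P(A)\le e^{-\upsilon k}$ applied to the overlap of length at least $c\ln b(n)$ (giving $O(b(n)^{-3})$ per shift with $\upsilon c=3$), and Borel--Cantelli. The only cosmetic differences are that you apply Borel--Cantelli directly to $P\times P$ rather than fixing $\omega$ and using Fubini, and that you prove the self-period estimates for $\pi(A_n^\omega)$ and $\pi(A_{m(n)}^\eta)$ from scratch via the forced-periodic-word argument with a single use of $\psi(1)$, where the paper simply cites \cite[Corollary 2.2]{KR}.
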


\begin{proof}
For $\omega\in\Omega$ and $n\ge1$ set
\[
B_{\omega,n}=\{\eta\in\Omega\::\:\pi(A_{n}^{\omega},A_{m(n)}^{\eta})\le b(n)-
c\ln b(n)\}.
\]
Assume $b(n)-c\ln b(n)\ge1$ and set $d=[b(n)-c\ln b(n)]$, then
\[
P(B_{\omega,n})\le\sum_{r=0}^{d}P\{\eta\::\:
T^{-r}A_{n}^{\omega}\cap A_{m(n)}^{\eta}\ne\emptyset\}+\sum_{r=0}^{d}P
\{\eta\::\:T^{-r}A_{m(n)}^{\eta}\cap A_{n}^{\omega}\ne\emptyset\}.
\]
For $0\le r\le d$,
\[
\{\eta\::\:T^{-r}A_{n}^{\omega}\cap A_{m(n)}^{\eta}\ne\emptyset\}
=T^{-r}[\omega_{0},...,\omega_{n\wedge (m(n)-r)-1}]
\]
and
\[
\{\eta\::\:T^{-r}A_{m(n)}^{\eta}\cap A_{n}^{\omega}\ne\emptyset\}
=[\omega_{r},...,\omega_{n\wedge(m(n)+r)-1}].
\]
Hence by (\ref{eq:estimate of measure of cylinders}),
\begin{eqnarray*}
P(B_{\omega,n}) & \le &
\sum_{r=0}^{d}e^{-\upsilon(n\wedge (m(n)-r))}
+\sum_{r=0}^{d}e^{-\upsilon((n-r)\wedge m(n))}\\& \le &
2\sum_{r=0}^{d}e^{-\upsilon(b(n)-r)}\le
2\frac{e^{-\upsilon(b(n)-d)}}{1-e^{-\upsilon}}\le
\frac{2b(n)^{-3}}{1-e^{-\upsilon}}.
\end{eqnarray*}
From this and since $|b(n)-n|=o(n)$ it follows that $\sum_{n=1}^{\infty}
P(B_{\omega,n})<\infty$,
and so by the Borel-Cantelli lemma
\[
P\{\eta\::\:\#\{n\ge1\::\:\eta\in B_{\omega,n}\}=\infty\}=0\:.
\]
From Fubini's theorem we now get,
\begin{multline*}
P\times P\{(\omega,\eta)\::\:\#\{n\ge1\::\:\pi(A_{n}^{\omega},
A_{m(n)}^{\eta})\le b(n)-c\ln b(n)\}=\infty\}\\
=\int_{\Omega}P\{\eta\::\:\#\{n\ge1\::\:\eta\in B_{\omega,n}\}=
\infty\}\:dP(\omega)=0\:.
\end{multline*}
In a similar manner (see \cite[Corollary 2.2]{KR}) it can be shown
that
\[
P\{\omega\::\:\#\{n\ge1\::\:\pi(A_{n}^{\omega})\le b(n)-c\ln b(n)\}=\infty\}
=0\:
\]
and
\[
P\{\eta\::\:\#\{n\ge1\::\:\pi(A_{m(n)}^{\eta})\le b(n)-c\ln b(n)\}=\infty\}
=0\:
.\]
This completes the proof of the lemma.
\end{proof}
\begin{proof}[Proof of Corollary \ref{cor:cor from main thm}]
Let $c$ and $\mathcal{E}$ be as in the statement of Lemma \ref{lem:bound on
 periods}.
Denote by $h$ the entropy of the system $(\Omega,T,P)$.
Let $\mathcal{E}_{0}$ be the set of all $(\omega,\eta)\in\mathcal{E}
\cap(\Omega_{P}\times\Omega_{P})$
for which
\[
-\underset{n\to\infty}{\lim}\frac{\log P(A_{n}^{\omega})}{n}=
-\underset{n\to\infty}{\lim}
\frac{\log P(A_{n}^{\eta})}{n}=h\:.
\]
By the Shannon-McMillan-Breiman Theorem (see, for instance, \cite{Pe})
and Lemma \ref{lem:bound on periods}
it follows that $P\times P(\Omega^{2}\setminus\mathcal{E}_{0})
=0$.

Let $(\omega,\eta)\in\mathcal{E}_{0}$, then for every $n\ge1$ large
enough
\[
e^{-\upsilon\kappa^{\omega,\eta}_{n,m(n)}/2}\le\exp(\frac{-\upsilon(b(n)-c\ln
 b(n))}{2})\le
b(n)^{2}\cdot e^{-\upsilon b(n)/2}\:.
\]
By our assumption $\gamma(n)$ grows at most polynomially, hence by $|m(n)-n|=
o(n)$,
\[
e^{-\upsilon\kappa^{\omega,\eta}_{n,m(n)}/2}\gamma(n\vee m(n))\overset{n}
{\rightarrow}0\text{
as }n\rightarrow\infty\:.
\]
From
\begin{multline*}
\underset{n\to\infty}{\lim}\:\log\left(\left(\frac{P(A_{n}^{\omega})}{
P(A_{m(n)}^{\eta})}\right)^{\ell}(n\vee m(n))b(n)^{2}e^{-\upsilon b(n)/2}
\right)\\
=\underset{n\to\infty}{\lim}\:n\cdot\left(\frac{\ell\log P(A_{n}^{\omega
})}{n}-\frac{\ell\log P(A_{m(n)}^{\eta})}{n}+\frac{\log (n\vee m(n))}{n}+
\frac{2\log b(n)}{n}-\frac{\upsilon b(n)}{2n}\right)\\
=\left(\ell h-\ell h-\frac{\upsilon}{2}\right)\underset{n\to\infty}{\lim}\:n=
-\infty,
\end{multline*}
it follows that
\[
\left(1+\left(\frac{P(A_{n}^{\omega})}{P(A_{m(n)}^{\eta})}
\right)^{\ell}\right)(n\vee m(n))e^{-\upsilon\kappa^{\omega,\eta}_{n,m(n)}/2}
\rightarrow 0
\text{ as }n\rightarrow\infty\:.
\]
By our assumption $\psi(n)$ tends to $0$ at an exponential rate as
$n\rightarrow\infty$, hence we also have
\[
\left(1+\left(\frac{P(A_{n}^{\omega})}{P(A_{m(n)}^{\eta})}
\right)^{\ell}\right)\psi(m(n))^{1/2}\rightarrow 0\text{ as }n\rightarrow\infty
\:.
\]
The corollary now follows directly from Theorem \ref{thm:main thm}.
\end{proof}
 In what follows
we will consider $\ell$ and $\psi(1)$ as global constants. Hence,
whenever we use the big-O notation the implicit constant may depend
on these parameters. We will need also the following result.
\begin{lemma}
\label{lem:exp estimate}Let $t\ge1$ and $n\ge1$ be such that $\psi(n)
<(3/2)^{1/(\ell+1)}-1$.
Then for every $\eta\in\Omega_{P}$,
\[
\left|P\left\{ P(A_{n}^{\eta})^{\ell}\tau_{n}^{\eta}>t
\right\} -\mbox{\normalfont Pois}(t)\{0\}\right|=O\left(te^{-\upsilon
\pi(A_{n}^{\eta})}\left(n+\gamma(n)\right)+t\psi(n)\right),
\]
where, recall, {\normalfont Pois}$(t)$ is the Poisson distribution with the
parameter $t$.
\end{lemma}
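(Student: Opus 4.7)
The plan is to reduce the estimate to a Poisson-approximation question for the counting variable $S_M=\sum_{k=1}^{M}X_k$ with $X_k=\prod_{i=1}^{\ell}\bbI_{A_n^\eta}\circ T^{q_i(k)}$ and $M=\lfloor tP(A_n^\eta)^{-\ell}\rfloor$, and then to apply the Stein--Chen style bound of \cite{AGG} essentially as in the proof of Theorem \ref{thm2.3}, only with the generic parameters $Q(\Gam),R$ used there replaced by the shift-specific quantities $P(A_n^\eta)$ and $R=n$, so that the Poisson error can be read off in terms of $\pi(A_n^\eta)$ and $\gamma(n)$.

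First I would observe that $\{P(A_n^\eta)^\ell\tau_n^\eta>t\}=\{\tau_n^\eta>M\}=\{S_M=0\}$ and $\mbox{Pois}(t)\{0\}=e^{-t}$, so by the triangle inequality
\[
|P\{S_M=0\}-\mbox{Pois}(t)\{0\}|\le d_{TV}(\cL(S_M),\mbox{Pois}(ES_M))+|ES_M-t|.
\]
I would then invoke Theorem 1 of \cite{AGG}, bounding the first summand by $b_1+b_2+b_3$ for the dependency neighborhood $B_k^{M,n}=\{l\le M:\delta(k,l)\le n\}$; the hypothesis $\psi(n)<(3/2)^{1/(\ell+1)}-1$ is precisely what is needed to apply Lemma 3.3 of \cite{KR} in the estimate of $b_3$.

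For $b_3$, Lemma 3.3 of \cite{KR} with gap $n$ gives $s_k\le C\psi(n)p_k$, hence $b_3\le C\psi(n)ES_M=O(t\psi(n))$. For $b_1$, since $p_k\le CP(A_n^\eta)^\ell$ by Lemma 3.2 of \cite{KR} and $|B_k^{M,n}|=O(n)$, one obtains $b_1=O(MnP(A_n^\eta)^{2\ell})=O(tnP(A_n^\eta)^\ell)$, and since $P(A_n^\eta)\le e^{-\upsilon n}\le e^{-\upsilon\pi(A_n^\eta)}$ by (\ref{eq:estimate of measure of cylinders}) and $\pi(A_n^\eta)\le n$, this is $O(tne^{-\upsilon\pi(A_n^\eta)})$. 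The delicate piece is $b_2=\sum_{k\le M}\sum_{l\in B_k^{M,n},l\ne k}E(X_kX_l)$: for each near pair $(k,l)$ with $k<l$, let $r$ be the smallest positive value of the form $q_j(l)-q_i(k)\in(0,n]$; by the very definition of $\pi(A_n^\eta)$ one must have $r\ge\pi(A_n^\eta)$, and then the two corresponding indicators collapse into $\bbI_{A_n^\eta\cap T^{-r}A_n^\eta}\circ T^{q_i(k)}$, supported on a cylinder of length $n+r$, so that (\ref{eq:estimate of measure of cylinders}) contributes a factor $e^{-\upsilon(n+r)}\le e^{-\upsilon n}e^{-\upsilon\pi(A_n^\eta)}$; combining this with the $\psi$-mixing control of the remaining $2\ell-2$ indicators (and treating crudely the at most $\gamma(n)$ small indices $k<\gamma(n)$ on which $q(k)$ may be too small for the factorization to apply) then yields $b_2=O(t(n+\gamma(n))e^{-\upsilon\pi(A_n^\eta)})$.

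Finally, for $|ES_M-t|$ I would use Lemma 3.2 of \cite{KR}: when $k\ge\gamma(n)$ one has $q(k)\ge 2n$ and hence $|EX_k-P(A_n^\eta)^\ell|\le C\psi(n)P(A_n^\eta)^\ell$, whereas for the $\gamma(n)$ values $k<\gamma(n)$ only the crude bound $|EX_k-P(A_n^\eta)^\ell|\le CP(A_n^\eta)^\ell$ is available; this, together with $|MP(A_n^\eta)^\ell-t|\le P(A_n^\eta)^\ell\le e^{-\upsilon\pi(A_n^\eta)}$ and $t\ge 1$, gives $|ES_M-t|=O(t\gamma(n)e^{-\upsilon\pi(A_n^\eta)}+t\psi(n))$. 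Summing the four contributions produces the claimed bound. I expect the main obstacle to be the $b_2$ analysis, where one must simultaneously identify the short-return mechanism forcing $r\ge\pi(A_n^\eta)$, turn the resulting overlap into a longer cylinder via (\ref{eq:estimate of measure of cylinders}), and handle the two regimes $k\ge\gamma(n)$ (where the background indicators are well separated for $\psi$-mixing) and $k<\gamma(n)$ (where only a crude bound is available but there are few terms) in a way compatible with the target $O(te^{-\upsilon\pi(A_n^\eta)}(n+\gamma(n)))$.
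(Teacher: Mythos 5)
Your opening reduction is exactly the paper's first step: $\{P(A_n^\eta)^\ell\tau_n^\eta>t\}=\{S_M=0\}$ with $M=[tP(A_n^\eta)^{-\ell}]$. But the paper then finishes in one line by citing \cite[Theorem 2.1]{KR}, which applied to this sum already gives the stated bound for $|P\{S_M=0\}-e^{-t}|$ (its hypothesis is precisely $\psi(n)<(3/2)^{1/(\ell+1)}-1$); what you propose is to re-derive that theorem via \cite{AGG}, and several of your estimates do not hold as written. First, with a dependency neighbourhood of radius $n$ the length-$n$ coordinate blocks of $X_k$ and of the $X_l$ with $l\notin B_k^{M,n}$ are separated by a gap of only one or two letters, so the Lemma 3.3 argument of \cite{KR} yields $b_3=O(t\psi(1))$, not $O(t\psi(n))$, and $\psi(1)$ does not tend to zero; you need radius $2n$ (this is exactly why $\gamma(n)$ is defined through $q(k)\ge 2n$, and why the analogous sets in Section \ref{sec4} use $2(n\vee m)$). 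Second, for $k<\gamma(n)$ the blocks inside $X_k$ may overlap, so neither $p_k\le CP(A_n^\eta)^\ell$ nor your crude bound $|EX_k-P(A_n^\eta)^\ell|\le CP(A_n^\eta)^\ell$ is justified; the usable crude bound is $EX_k\le P(A_n^\eta)\le e^{-\upsilon n}\le e^{-\upsilon\pi(A_n^\eta)}$, which still suffices but must be what you actually use, both in $b_1$ and in $|ES_M-t|$.

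The more serious gap is in $b_2$. Your mechanism gives $P\{X_k=X_l=1\}\le e^{-\upsilon(n+r)}\cdot(\text{remaining indicators})$. The ``$\psi$-mixing control of the remaining $2\ell-2$ indicators'' is not automatic, since the leftover blocks of $X_k$ and $X_l$ may themselves be close or overlapping (and for $\ell=1$ there are no remaining indicators at all); the safe option is to keep only the $\ell-1$ remaining blocks of $X_k$, but then $b_2=O\bigl(MnP(A_n^\eta)^{\ell-1}e^{-\upsilon(n+\pi(A_n^\eta))}\bigr)=O\bigl(tn\,e^{-\upsilon\pi(A_n^\eta)}\,e^{-\upsilon n}/P(A_n^\eta)\bigr)$, and the factor $e^{-\upsilon n}/P(A_n^\eta)$ is unbounded in general, since (\ref{eq:estimate of measure of cylinders}) is only an upper bound on cylinder measures. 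What you need is $P\{X_k=X_l=1\}=O\bigl(P(A_n^\eta)^\ell e^{-\upsilon\pi(A_n^\eta)}\bigr)$, i.e.\ the overlap must produce the factor $e^{-\upsilon r}$ on top of the \emph{full} probability $P(A_n^\eta)$, not the raw bound $e^{-\upsilon(n+r)}$ for the long cylinder; this follows, for instance, from $A_n^\eta\cap T^{-r}A_n^\eta\subset A_n^\eta\cap T^{-(n+1)}[\eta_{n-r+1}\dots\eta_{n-1}]$ and one application of $\psi$-mixing with gap $1$, giving $P(A_n^\eta\cap T^{-r}A_n^\eta)\le(1+\psi(1))e^{\upsilon}P(A_n^\eta)e^{-\upsilon r}$, after which only the $\ell-1$ other blocks of $X_k$ need to be factored. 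You also omit the case where the only near cross-differences vanish, $q_i(k)=q_j(l)$, in which there is no positive difference in $(0,n]$ to exploit and a separate (easy) estimate is required. All of this is repairable, and indeed amounts to reproving \cite[Theorem 2.1]{KR}; but as written the proposal does not deliver the claimed bound, whereas the paper's proof simply quotes that theorem after your first reduction.
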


\begin{proof}
For $n\ge1$ set $N_{n}=[tP(A_{n}^{\eta})^{-\ell}]$ and
\[
S_{n}=\sum_{k=1}^{N_{n}}\prod_{i=1}^{\ell}\bbI_{A_{n}^{\eta}}\circ T^{q_{i}(k)},
\]
then
\begin{equation}
P\{S_{n}=0\}=P\left\{ P(A_{n}^{\eta})^{\ell}
\tau_{n}^{\eta}>t\right\} \:.\label{eq:S and tau relation}
\end{equation}
By \cite[Theorem 2.1]{KR},
\[
\left|P\{S_{n}=0\}-\mbox{Pois}(t)\{0\}\right|=O\left(te^{-\upsilon
\pi(A_{n}^{\eta})}\left(n+\gamma(n)\right)+t\psi(n)\right)\:.
\]
This together with (\ref{eq:S and tau relation}) proves the lemma.
\end{proof}
\subsection{Proof of Theorem \ref{thm:main thm}}\label{subsec4.2}
Let $(\omega,\eta)\in\Omega_{P}\times\Omega_{P}$
and $n,m\ge1$ with $\psi(m)<(3/2)^{1/(\ell+1)}-1$. Set $\kappa=\kappa^{\omega,
\eta}_{n,m}$, then we can clearly assume that
\begin{equation}
e^{-\upsilon\kappa/2}\left(n\vee m+\gamma(n\vee m)\right)\le\frac{1}{2}\:.
\label
{eq:trivial bound}
\end{equation}
Set $\epsilon=\max\{e^{-\upsilon\kappa},\psi(m)\}$ and $t=\epsilon^{-1/2}$,
then $0<\epsilon\leq 1$ and $e^{-t}<\epsilon$.

Set $p_{\eta}=P(A_{m}^{\eta})^{\ell}$, $p_{\omega}=P
(A_{n}^{\omega})^{\ell}$,
$L=[t\cdot p_{\eta}^{-1}]$, $I_{0}=\{\gamma(n\vee m),...,L\}\times\{0\}$,
$I_{1}=\{\gamma(n\vee m),...,L\}\times\{1\}$, and $I=I_{0}\cup I_{1}$.
For $\gamma(n\vee m)\le k\le L$ define random variables $X_{k,0}$ and
$X_{k,1}$ on $(\Omega,\mathcal{F},P)$ by
\[
X_{k,0}=\prod_{i=1}^{\ell}\bbI_{A_{m}^{\eta}}\circ T^{q_{i}(k)}\text{ and }
X_{k,1}=\prod_{i=1}^{\ell}\bbI_{A_{n}^{\omega}}\circ T^{q_{i}(k)},
\]
and denote the Bernoulli process $\{X_{k,l}\::\:(k,l)\in I\}$, i.e.
$X_{k,l}$ takes values 0 or 1 only, by
$\mathbf{X}$. Let
\[
\mathbf{X}'=\{X'_{k,l}\::\:(k,l)\in I\}
\]
be a Bernoulli process, with $\mathcal{L}(X_{k,l})=\mathcal{L}(X'_{k,l})$
for each $(k,l)\in I$, such that the $X'_{k,l}$ are all mutually
independent.

Let $\mathbf{Y}=\{Y_{k,l}\::\: k\geq 0,\, l=0,1\}$ be a collection of
independent
Bernoulli random variables such that $\bbP\{ Y_{k,0}=1\}=p_\eta=1-\bbP\{
Y_{k,0}=0\}$ and $\bbP\{ Y_{k,1}=1\}=p_\om=1-\bbP\{Y_{k,1}=0\}$. Write also,
\[
\mathbf{Y}'=\{Y_{k,l}\::\:(k,l)\in I\}\:.
\]

For $y\in\{0,1\}^{\mathbb{N}\times\{0,1\}}$ set
\[
\tilde{f}(y)=\inf\{k\ge0\::\:y_{k,0}=1\}\mbox{ and }\tilde{g}(y)=
\sum_{j=0}^{\tilde{f}(y)-1}y_{j,1},
\]
and for $y\in\{0,1\}^{I}$ set
\[
f(y)=\min\{\gamma(n\vee m)\le k\le L\::\:y_{k,0}=1\mbox{ or }k=L\}\mbox{ and }
g(y)=
\sum_{j=\gamma(n\vee m)}^{f(y)-1}y_{j,1}\:.
\]

Let $S\subset\mathbb{N}$, then
\begin{multline}
|P\{\Sig^{\om,\eta}_{n,m}\in S\}-\mbox{Geo}(\frac{p_{\eta}}{p_{\eta}+
p_{\omega}})(S)|\le|P\{\Sig^{\om,\eta}_{n,m}\in S\}-P
\{g(\mathbf{X})\in S\}|\\
+|P\{g(\mathbf{X})\in S\}-P\{g(\mathbf{X}')\in S\}|+
|P\{g(\mathbf{X}')\in S\}-P\{g(\mathbf{Y}')\in S\}|\\
+|P\{g(\mathbf{Y}')\in S\}-P\{\tilde{g}(\mathbf{Y})\in S\}|
+|P\{\tilde{g}(\mathbf{Y})\in S\}-\mbox{Geo}(\frac{p_{\eta}}{p_{\eta}+
p_{\omega}})(S)|\\
=\sigma_{1}+\sigma_{2}+\sigma_{3}+\sigma_{4}+\sigma_{5}\:.\label{eq:5
expressions}
\end{multline}

Let us estimate $\sigma_{1}$ from above. The event $\{\Sig^{\om,\eta}_{n,m}\ne
g(\mathbf{X})\}$
is contained in the union of the events $\{\tau_{m}^{\eta}>L\}$ and
\[
E=\{\prod_{i=1}^{\ell}\bbI_{A_{n}^{\omega}}\circ T^{q_{i}(k)}=1\text{ or }
\prod_{i=1}^{\ell}\bbI_{A_{m}^{\eta}}\circ T^{q_{i}(k)}=1\text{ for some }0\le
k<\gamma(n\vee m)\}\:.
\]
By (\ref{eq:estimate of measure of cylinders}),
\[
P(E)=O(\gamma(n\vee m)e^{-\upsilon (n\wedge m)})\:.
\]

Since Pois$(t)\{ 0\}=e^{-t}<\ve$, this together with Lemma
\ref{lem:exp estimate} yields
\[
\sigma_{1}=O\left(\epsilon+te^{-\upsilon\kappa}\left(m+\gamma(n\vee m)\right)+
t\psi(m)\right)\:.
\]

Note that $\sigma_{2}\le d_{TV}(\mathcal{L}(\mathbf{X}),\mathcal{L}
(\mathbf{X}'))$,
hence the following lemma gives an upper estimate on $\sigma_{2}$.
\begin{lemma}
\label{lem:estimate on d(X,X')}It holds that,
\[
d_{TV}(\mathcal{L}(\mathbf{X}),\mathcal{L}(\mathbf{X}'))=O\left(t\left(1+
\frac{p_{\omega}}{p_{\eta}}\right)\left((n\vee m)e^{-\upsilon\kappa}+\psi(n
\vee m)\right)\right)\:.
\]
\end{lemma}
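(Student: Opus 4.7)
My plan is to follow the Chen--Stein template already used for Theorem \ref{thm2.1}. Applying Theorem 3 of \cite{AGG} to the indexed dependent Bernoulli family $\bfX=\{X_{k,l}:\,(k,l)\in I\}$ yields
\begin{equation*}
d_{TV}(\cL(\bfX),\cL(\bfX'))\le 2b_1+2b_2+2b_3+2\sum_{(k,l)\in I}p_{k,l}^2,
\end{equation*}
with $p_{k,l}=P\{X_{k,l}=1\}$ and $b_1,b_2,b_3$ the quantities (\ref{3.16})--(\ref{3.18}) associated to a dependence neighborhood $B_{(k,l)}\subset I$ that I choose to be $B_{(k,l)}=\{(k',l')\in I:\,\delta(k,k')\le R\}$ with $\delta(k,k')=\min_{1\le i,j\le\ell}|q_i(k)-q_j(k')|$ and $R=3(n\vee m)$. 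Strict monotonicity of the $q_i$ gives $|B_{(k,l)}|=O(n\vee m)$, while $|I|=O(t/p_\eta)$ because $L=[t/p_\eta]$. The restriction $k\ge\gamma(n\vee m)$ forces the internal separation $q_{i+1}(k)-q_i(k)\ge 2(n\vee m)$, so Lemma 3.2 of \cite{KR} gives $p_{k,0}\le(1+\psi(n\vee m))^\ell p_\eta$ and $p_{k,1}\le(1+\psi(n\vee m))^\ell p_\omega$.

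The terms $b_1$ and $\sum p_{k,l}^2$ are controlled by a direct product estimate of order $O(t(n\vee m)(p_\eta+p_\omega+p_\omega^2/p_\eta))$ after collecting the four cross-terms in $(l,l')$. For $b_3$, any $(k',l')\notin B_{(k,l)}$ has $|q_i(k)-q_j(k')|>3(n\vee m)$ for every $i,j$, so $\sigma(X_{k,l})$ and the $\sigma$-algebra generated by $\{X_{k',l'}:\,(k',l')\notin B_{(k,l)}\}$ are $\psi$-separated by a gap $\ge n\vee m$; applying Lemma 3.3 of \cite{KR} exactly as in (\ref{3.24})--(\ref{3.26}) delivers $s_{k,l}=O(\psi(n\vee m)p_{k,l})$ and hence
\begin{equation*}
b_3=O\big(|I|\,\psi(n\vee m)(p_\eta+p_\omega)\big)=O\big(t\,\psi(n\vee m)(1+p_\omega/p_\eta)\big).
\end{equation*}
To package these estimates into the form claimed by the lemma I use $\kappa\le n\wedge m$ together with (\ref{eq:estimate of measure of cylinders}), which gives $p_\eta\le e^{-\upsilon\ell m}\le e^{-\upsilon\kappa}$ and similarly $p_\omega\le e^{-\upsilon\kappa}$; then every stray $p_\eta$ or $p_\omega$ supplies the required $e^{-\upsilon\kappa}$ factor, and $p_\omega^2/p_\eta\le(p_\omega/p_\eta)e^{-\upsilon\kappa}$, so the $b_1$ and diagonal contributions already fit under $t(n\vee m)e^{-\upsilon\kappa}(1+p_\omega/p_\eta)$.

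The main obstacle is the estimate of $b_2$, namely bounding $E(X_{k,l}X_{k',l'})$ for a pair in $B_{(k,l)}$ whose $2\ell$ cylinder supports may partially overlap. Unlike in the proof of Theorem \ref{thm2.1}, where the underlying events are single-coordinate and one may cheaply discard all but one factor of $X_{k',\beta}$ at the cost of a single $Q(\Gam_\beta)$, here each cylinder factor occupies $m$ or $n$ consecutive coordinates and has marginal only $p_\eta^{1/\ell}$ or $p_\omega^{1/\ell}$. I plan to control it by combining three ingredients: (i)~strict monotonicity of the $q_i$ with internal separation $\ge 2(n\vee m)$ implies that for each $k$ there are only $O(n\vee m)$ values of $k'\in B_{(k,l)}$ for which substantial cross-overlap is possible, and in that case the positions $\{q_j(k')\}$ must bijectively pair with a subset of $\{q_i(k)\}$ via a shift of at most $n\vee m$; (ii)~the definition of $\kappa$ forces $|q_{i_j}(k)-q_j(k')|\ge\kappa$ for every matched pair, since otherwise the two cylinders at those positions would contradict $\pi(A_n^\omega,A_m^\eta)\ge\kappa$, so the consistency constraint at each matched pair occupies a cylinder of length $\ge\kappa+(n\wedge m)$, yielding an extra $e^{-\upsilon\kappa}$ factor via (\ref{eq:estimate of measure of cylinders}) beyond the naive cylinder-length bound; (iii)~the unmatched cylinder constraints are $\psi$-separated from everything by $\ge n\vee m$ and can be factored out via Lemma 3.2 of \cite{KR} at the cost of a $(1+\psi(n\vee m))^{O(\ell)}$ prefactor. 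Combining these ingredients and summing over $(k,l)\in I$ and $(k',l')\in B_{(k,l)}$ will produce $b_2=O(t(n\vee m)e^{-\upsilon\kappa}(1+p_\omega/p_\eta))$, and adding the five bounds completes the proof. Extracting the $\kappa$-dependence in step (ii) as a genuine $e^{-\upsilon\kappa}$ factor, rather than the weaker $e^{-\upsilon(n\wedge m)}$ that would only recover the naive cylinder-length bound, is the most delicate point of the argument.
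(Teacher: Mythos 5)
Your framework (Chen--Stein via Theorem 3 of \cite{AGG}, neighborhoods of size $O(n\vee m)$, the bounds for $b_1$, for the diagonal term, and for $b_3$ via the mixing lemma of \cite{KR}) coincides with the paper's, and those parts are fine. The genuine gap is exactly where you flag it: the estimate of $b_2$. Your step (ii) proposes to bound the joint constraint at each matched pair of overlapping cylinder blocks by the generic cylinder-measure estimate (\ref{eq:estimate of measure of cylinders}) applied to a cylinder of length $\ge\kappa+(n\wedge m)$, i.e.\ by $e^{-\upsilon(\kappa+n\wedge m)}$. This cannot yield the claimed bound $b_2=O\bigl(t(n\vee m)e^{-\upsilon\kappa}(1+p_\omega/p_\eta)\bigr)$, because the target is expressed through the \emph{actual} probabilities $p_\eta=P(A_m^\eta)^\ell$, $p_\omega=P(A_n^\omega)^\ell$, which may be far smaller than $e^{-\upsilon\ell(n\wedge m)}$ (only an upper bound on cylinder measures is available); e.g.\ when all $\ell$ blocks are matched your scheme produces $e^{-\upsilon\ell(\kappa+n\wedge m)}$ per pair, which need not be $O(e^{-\upsilon\kappa}(p_\eta+p_\omega))$, and the summation then fails. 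A second difficulty your matching scheme glosses over: in the containment configuration where a shorter block of one event sits entirely inside the footprint of a longer block of the other event (offset between $\kappa$ and $n\vee m-n\wedge m$, which the definition of $\pi(A_n^\omega,A_m^\eta)$ does not exclude since it only ranges over shifts $\le n\wedge m$), the matched block contributes \emph{no} fresh coordinates relative to the longer event, so the ``extra $e^{-\upsilon\kappa}$'' must be harvested from the other event's block, and your plan does not specify a consistent choice. Also, the assertion that all positions $q_j(k')$ pair bijectively with positions of $k$ is false in general (for $q_1(n)=n$, $q_2(n)=2n$ typically only one position is close), and same-type matched pairs require $\pi(A_n^\omega)$ or $\pi(A_m^\eta)$, not only $\pi(A_n^\omega,A_m^\eta)$ (harmless, since $\kappa$ is the minimum of all three).

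The paper avoids all of this with a much lighter observation that is the missing idea: for a pair with indices $r<k$ compare only the \emph{first} positions. If $q_1(k)-q_1(r)<\kappa$, the joint event is empty by the definition of $\kappa$ (whichever of the three quantities in the minimum is relevant). If $q_1(k)-q_1(r)\ge\kappa$, then the first $\kappa$ coordinates of the earlier event's first cylinder lie strictly to the left of $q_1(k)$, hence to the left of every coordinate on which $X_{k,l}$ depends; splitting them off with $\psi(1)$-mixing and applying (\ref{eq:estimate of measure of cylinders}) to that length-$\kappa$ piece gives $P\{X_{k,l}=1=X_{r,s}\}=O\bigl(e^{-\upsilon\kappa}P\{X_{k,l}=1\}\bigr)$, where the retained factor is the full probability of the \emph{later} event. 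Summing over $|B_{k,l}|=O(n\vee m)$ neighbors and $|I|=O(t/p_\eta)$ indices then gives precisely $b_2=O\bigl(t(n\vee m)e^{-\upsilon\kappa}(1+p_\omega/p_\eta)\bigr)$. Note that this single-truncation bound silently covers your containment scenario: there consistency forces $P(A_n^\omega)=O(e^{-\upsilon\kappa}P(A_m^\eta))$, so the joint probability, though equal to the larger-cylinder event's probability, is still $O(e^{-\upsilon\kappa}p_\eta)$. Without this (or an equivalent) device, your proof of the lemma is incomplete at its central point.
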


\begin{proof}[Proof of Lemma \ref{lem:estimate on d(X,X')}]
For $(k,l)\in I$ set
\[
B_{k,l}=\cup_{i,j=1}^{\ell}\{(r,s)\in I\::\:|q_{i}(r)-q_{j}(k)|\le2(n\vee m)\}
\]
and
\[
\mathcal{G}_{k,l}=\sigma\left\{ X_{r,s}\::\:(r,s)\in I\setminus B_{k,l}\right\}
 \:.
\]
By Theorem 3 in \cite{AGG},
\[
d_{TV}(\mathcal{L}(\mathbf{X}),\mathcal{L}(\mathbf{X}'))\le O(b_{1}+b_{2}+
b_{3}),
\]
where
\[
b_{1}=\sum_{(k,l)\in I}\:\sum_{(r,s)\in B_{k,l}}P\{X_{k,l}=1\}
P\{X_{r,s}=1\},
\]
\[
b_{2}=\sum_{(k,l)\in I}\:\sum_{(r,s)\in B_{k,l}\setminus(k,l)}P
\{X_{k,l}=1=X_{r,s}\},
\]
and
\[
b_{3}=\sum_{(k,l)\in I}E\left|E\left[X_{k,l}-E[X_{k,l}]\mid\mathcal{G}_{k,l}
\right]\right|\:.
\]

Let us estimate $b_{1}$ from above. By (\ref{eq:estimate of measure of
cylinders}),
\[
P\{X_{k,l}=1\}\le e^{-\upsilon (n\wedge m)}\text{ for }(k,l)\in I\:.
\]
Also, by the $\psi$-mixing assumption (see \cite[Lemma 3.2]{KR}),
\[
P\{X_{k,l}=1\}=\begin{cases}
O(p_{\eta}) & \text{if }l=0\\
O(p_{\omega}) & \text{if }l=1
\end{cases}\text{ for }(k,l)\in I\:.
\]
Since $|B_{k,l}|=O(n\vee m)$ for $(k,l)\in I$,
\[
b_{1}=O\left(L(n\vee m)\cdot e^{-\upsilon (n\wedge m)}(p_{\eta}+p_{\omega})
\right)\:.
\]
Hence by $Lp_{\eta}\le t$,
\[
b_{1}=O\left(t(n\vee m)\cdot e^{-\upsilon (n\wedge m)}(1+\frac{p_{\omega}}
{p_{\eta}})\right)\:.
\]

We shall now estimate $b_{2}$. Let $(k,l)\in I$ and $(r,s)\in B_{k,l}\setminus
(k,l)$
be given. Assume without loss of generality that $k\ge r$. If $|q_{1}(r)-q_{1}
(k)|<\kappa$
then $\{X_{k,l}=1=X_{r,s}\}=\emptyset$ by the definition of $\kappa$.
Otherwise, by the $\psi$-mixing assumption and (\ref{eq:estimate of measure of
 cylinders}),
\[
P\{X_{k,l}=1=X_{r,s}\}=O\left(P\{X_{k,l}=1\}\cdot e^{-\upsilon
\kappa}\right)\:.
\]
Hence, by the considerations made for bounding $b_{1}$,
\[
b_{2}=O\left(t(n\vee m)\cdot e^{-\upsilon\kappa}(1+\frac{p_{\omega}}{p_{\eta}})
\right)
\:.
\]

Finally, we estimate $b_{3}$ from above. Given $(k,l)\in I$ it follows,
by the argument given in the proof of \cite[Theorem 2.1]{KR} in
order to estimate $b_{3}$, that
\[
E\left|E\left(X_{k,l}-EX_{k,l}\mid\mathcal{G}_{k,l}\right)\right|=O\left(
\psi(n\vee m)P\{X_{k,l}=1\}\right)\:.
\]
Hence,
\[
b_{3}=O\left(t\psi(n\vee m)(1+\frac{p_{\omega}}{p_{\eta}})\right)\:.
\]
Now by the estimates on $b_{1}$, $b_{2}$ and $b_{3}$ the lemma
follows.
\end{proof}
We now resume the main proof and estimate $\sigma_{3}$. As explained in
Section \ref{sec3}, given
probability distributions $\mu_{1},\mu_{2},\nu_{1},\nu_{2}$, on the
same measurable space, it holds that
\[
d_{TV}(\mu_{1}\times\nu_{1},\mu_{2}\times\nu_{2})\le d_{TV}(\mu_{1},\mu_{2})
+d_{TV}(\nu_{1},\nu_{2})\:.
\]
From this and since $\mathbf{X}'$ and $\mathbf{Y}'$ are independent
Bernoulli precesses,
\begin{multline*}
\sigma_{3}\le d_{TV}(\mathcal{L}(\mathbf{X}'),\mathcal{L}(\mathbf{Y}'))\le
\sum_{(k,l)\in I}d_{TV}(\mathcal{L}(X'_{k,l}),\mathcal{L}(Y_{k,l}))\\
=\sum_{(k,l)\in I}|P\{X'_{k,l}=1\}-P\{Y_{k,l}=1\}|\:.
\end{multline*}
For $\gamma(n\vee m)\le k\le L$ it follows by the $\psi$-mixing assumption
(see \cite[Lemma 3.2]{KR}) that
\[
|P\{X'_{k,0}=1\}-P\{Y_{k,0}=1\}|=\left|P\left\{
 \prod_{i=1}^{\ell}\bbI_{A_{m}^{\eta}}\circ T^{q_{i}(k)}=1\right\} -p_{\eta}
 \right|=O(\psi(n\vee m)p_{\eta}),
\]
and similarly
\[
|P\{X'_{k,1}=1\}-P\{Y_{k,1}=1\}|=O(\psi(n\vee m)p_{\omega})\:.
\]
Hence,
\[
\sigma_{3}=O(L\psi(n\vee m)(p_{\eta}+p_{\omega}))=O(t\psi(n\vee m)(1+
\frac{p_{\omega}}
{p_{\eta}}))\:.
\]

In order to bound $\sigma_{4}$, note that the event $\{g(\mathbf{Y}')\ne
\tilde{g}(\mathbf{Y})\}$
is contained in the union of the events
\[
E_{1}=\{Y_{k,l}=1\text{ for some }0\le k<\gamma(n\vee m)\text{ and }l=0\text{ 
or }
1\}
\]
and
\[
E_{2}=\{Y_{k,0}=0\mbox{ for each }\gamma(n\vee m)\le k\le L\}.
\]
By (\ref{eq:estimate of measure of cylinders}),
\[
P(E_{1})=O(\gamma(n\vee m)e^{-\upsilon (n\wedge m)})\:.
\]
Since $\mathbf{Y}$ is an independent Bernoulli process with $P\{ Y_{k,0}
=1\}=p_{\eta}$ for $k\ge0$,
\[
P(E_{2})=O\left((1-p_{\eta})^{L-\gamma(n\vee m)}\right)\:.
\]
Since $x\ge\log(1+x)$ for $x>-1$,
\begin{eqnarray*}
&(1-p_{\eta})^{L-\gamma(n\vee m)}=\exp\left(\left(\gamma(n\vee m)-L\right)\log
\left(1+\frac{p_{\eta}}{1-p_{\eta}}\right)\right)\\
&\le\exp\left(\frac{(\gamma(n\vee m)-L)p_{\eta}}{1-p_{\eta}}\right)\:.
\end{eqnarray*}
Now by (\ref{eq:trivial bound}) we get $P(E_{2})=O(e^{-t})=O
(\epsilon)$
which gives
\[
\sigma_{4}=O\left(\gamma(n\vee m)e^{-\upsilon (n\wedge m)}+\epsilon\right)\:.
\]

Next, observe that $\tilde g(\bfY)$ has by Lemma \ref{lem3.1} the geometric
distribution with the parameter $p_\eta(p_\om+p_\eta-
p_\om p_\eta)^{-1}>p_\eta(p_\eta+p_\om)^{-1}$. Hence, in the same way as in
(\ref{3.29}) we obtain
\[
\sig_5\leq d_{TV}\big(\cL(\tilde g(\bfY)),\,\cL(\mbox{Geo}(\frac {p_\eta}
{p_\eta+p_\om}))\big)\leq 2p_\om\leq 2e^{-\upsilon n}.
\]

Combining all of our bounds we obtain,
\[
\sum_{j=1}^{5}\sigma_{j}=O\left(te^{-\upsilon\kappa}\gamma(n\vee m)+t\left(1+
\frac{p_{\omega}}{p_{\eta}}\right)(n\vee m)\cdot e^{-\upsilon\kappa}+t\psi(m)
(1+
\frac{p_{\omega}}{p_{\eta}})+\epsilon\right)\:.
\]
Recall that
\[
\epsilon=\max\{e^{-\upsilon\kappa},\psi(m)\}\text{ and }t=\epsilon^{-1/2},
\]
hence
\[
\sum_{j=1}^{5}\sigma_{j}=O\left(e^{-\upsilon\kappa/2}\gamma(n\vee m)+\left(1+
\frac{p_{\omega}}{p_{\eta}}\right)(n\vee m)\cdot e^{-\upsilon\kappa/2}+
\psi(m)^{1/2}
(1+\frac{p_{\omega}}{p_{\eta}})\right)\:.
\]
Now since $S$ from (\ref{eq:5 expressions}) is an arbitrary subset
of $\mathbb{N}$ the theorem follows.
\qed

\bibliography{matz_nonarticles,matz_articles}
\bibliographystyle{alpha}

\end{document}